\documentstyle[leqno,amssymb]{article}

 \def\qed{\unskip\quad \hbox{\vrule\vbox to 6pt {\hrule width
          4pt\vfill\hrule}\vrule} }
          \newcommand{\bez}{\nopagebreak\hspace*{\fill}
          \nolinebreak$\qed$\vspace{5mm}\par}
          \newenvironment{proof}{\vspace{1ex}
               \vspace*{10mm}\vspace{-10mm}\par\noindent{\bf

               Proof.}\nopagebreak
                    \par}{\nopagebreak\linebreak[0]\hspace*{\fill}
 $\qed$\vspace{5mm}\pagebreak[0]\vspace{3ex}\par}

                         \newtheorem{Th}{Theorem}
                         \newtheorem{Prop}{Proposition}
                         \newtheorem{Lemma}{Lemma}
                         \newtheorem{Remark}{Remark}
                         
                         \newtheorem{Def}{Definition}
                         \newtheorem{Cor}{Corollary}

\newcommand{\bs}{{\Bbb{S}}}

\newcommand{\cc}{{\cal C}}
\newcommand{\cs}{{\cal S}}

\newcommand{\ci}{{\cal I}}

\newcommand{\ce}{{\cal E}}

\newcommand{\cb}{{\cal B}}
\newcommand{\ca}{{\cal A}}

\newcommand{\cf}{{\cal F}}

\newcommand{\cm}{{\cal M}}
\newcommand{\cp}{{\cal P}}

\newcommand{\R}{{\Bbb{R}}}

\newcommand{\C}{{\Bbb{C}}}
\newcommand{\Z}{{\Bbb{Z}}}

\newcommand{\E}{{\Bbb{E}}}

\newcommand{\xbm}{(X,{\cal B},\mu)}

\newcommand{\zdr}{(Z,{\cal D},\rho)}
\newcommand{\ycn}{(Y,{\cal C},\nu)}

\newcommand{\tf}{T_{\varphi}}

\newcommand{\tfs}{T_{\varphi,{\cal S}}}

\newcommand{\ov}{\overline}
\newcommand{\beq}{\begin{equation}}
\newcommand{\eeq}{\end{equation}}
\newcommand{\vep}{\varepsilon}
\newcommand{\va}{\varphi}

\newcommand{\ot}{\otimes}
\newcommand{\la}{\lambda}

\newcommand{\Aut}{{\rm Aut}}

\begin{document}

\title{Relatively finite measure-preserving extensions
and lifting multipliers by Rokhlin cocycles}
\author{T. Austin\thanks{Research partially supported by
MSRI (Berkeley) program ``Ergodic Theory and
Additive Combinatorics"}  \and M. Lema\'nczyk\thanks{Research
partially supported by Polish MNiSzW grant N N201 384834; partially
supported by Marie Curie ``Transfer of Knowledge'' EU program --
project MTKD-CT-2005-030042 (TODEQ) and MSRI (Berkeley) program
``Ergodic Theory and Additive Combinatorics"}}

\maketitle

\thispagestyle{empty} {\em Dedicated to Stephen Smale in
recognition of his contributions to topology
 and dynamical systems}

\vspace{5ex}

\noindent\thanks{{\em 2000 Mathematical Subject Classification}:
37A05, 37A20, 37A40.}

\vspace{5pt}

\noindent\thanks{{\em Key words and phrases:} ergodicity, Rokhlin
cocycle, joining, disjointness, non-singular automorphism,
relatively finite measure preserving extension.}

\vspace{5ex}

 \normalsize

\begin{abstract}We show that under some natural ergodicity assumptions
extensions given by Rokhlin cocycles lift the multiplier property
if the associated locally compact group extension has only
countably many $L^\infty$-eigenvalues. We make use of some analogs
of basic results from the theory of finite-rank modules associated
to an extension of measure-preserving systems in the setting of a
non-singular base.
\end{abstract}

\section*{Introduction}

Our main concern in this paper is with measure-preserving
automorphisms of a standard Borel probability space $\xbm$.  We
will denote by $\Aut_0\xbm$ the Polish group of all such
automorphisms up to almost-everywhere agreement.

Assume that $T \in \Aut_0\xbm$ is ergodic. If $\ov{T}$ acting on
$(\ov{X},\ov{\cb},\ov{\mu})$ is an ergodic extension of $T$ then
the classical Abramov-Rokhlin Theorem states that we can assume
that $(\ov{X},\ov{\cb},\ov{\mu})=\xbm\ot\ycn$ and
$$\ov{T}(x,y)=(Tx,\theta_x(y)),$$
where $\theta:X\to\Aut_0\ycn$ is a so-called {\em Rokhlin
cocycle}.

Moreover, as noticed in \cite{Da-Le}, any $\Aut_0\ycn$-valued
cocycle $\theta$ is cohomologous to a special cocycle. Namely,
there exist a locally compact amenable group $G$ together with a
cocycle $\va:X\to G$ and a measurable $G$-action $\cs=(S_g)_{g\in
G}$ on $\ycn$ such that $\theta$ is  cohomologous to the cocycle
$x\mapsto S_{\va(x)}$. The resulting automorphism will be denoted
by $T_{\va,\cs}$: that is,
$$T_{\va,\cs}(x,y)=(Tx,S_{\va(x)}(y)).$$
It follows that the study of ergodic extensions of a given
transformation $T$ is reduced to that of systems of the form
$T_{\va,\cs}$.

Such extensions have recently been examined in the context of
lifting disjointness properties \cite{Da-Le}, \cite{Gl},
\cite{Gl-We}, \cite{Le-Le}, \cite{Le-Pa}.  In particular, the
investigations \cite{Da-Le} and \cite{Le-Pa} led to sufficient
conditions on $\va$ and $\cs$ so that if $T$ and $R$ are disjoint
in Furstenberg's sense \cite{Fu} (denoted $T\perp R$) then also
$T_{\va,\cs}\perp R$.

A more subtle problem concerns the lifting of the multiplier
property. Given a class $\cf\subset\Aut_0\xbm$, we denote by
$\cf^\perp$ the class of automorphisms disjoint from all systems in
$\cf$. An automorphism $T$ is said to be a {\em multiplier} of
$\cf^{\perp}$ (denoted $T\in \cm(\cf^\perp)$) if for each
automorphism $S\in \cf^\perp$ and any ergodic joining $T\vee S$ we
also have $T\vee S\in \cf^\perp$. A central result of \cite{Le-Pa}
provided an example of an ergodic automorphism $T_{\va,\cs} \in
\rm{WM}^\perp$ which was not a multiplier of WM$^\perp$, and which
could then be used to answer negatively the question, originally
posed by Glasner in~\cite{Gl}, of whether the class $\rm{WM}^\perp$
is closed under ergodic joinings. In that construction $T$ was an
ergodic rotation and the skew product $T_\va$ acting on $X\times G$
by the formula $T_\va(x,g)=(Tx,\va(x)g)$ (which preserves the {\bf
infinite} measure $\mu\ot\la_G$, where $\la_G$ denotes a Haar
measure of $G$) had uncountably many $L^\infty$-eigenvalues.  This
property was crucial for the arguments of~\cite{Le-Pa}, all earlier
constructions over a rotation for which $T_\va$ had only countably
many $L^\infty$-eigenvalues having led to multipliers of WM$^\perp$
(see~\cite{Gl-We}). The relationship between \cite{Gl}, \cite{Gl-We}
and \cite{Le-Pa} was explained in \cite{Da-Le}, where the main
result asserted that given an ergodic rotation $T$ and an ergodic
cocycle $\va$ such that $T_\va$ has countable $L^\infty$-spectrum,
the automorphism $T_{\va,\cs}$ is always a multiplier of WM$^\perp$.

In the present paper we use an alternative approach to the
multiplier property to prove a  generalization of the result from
\cite{Da-Le} (see Theorem~\ref{main} below):

{\em Assume that $T$ is ergodic and that $\va:X\to G$ is ergodic,
and that these are such that $T_{\va,\cs}$ is ergodic and
$T_{\va}$ has countably many $L^\infty$-eigenvalues. If in
addition $T\in\cm({{\cf}^\perp})$ for some class $\cf\subset {\rm WM}$,
then also $T_{\va,\cs}\in\cm({\cf}^\perp)$. In particular, the
result holds for $\cf={\rm WM}$. }

We will also show that this theorem gives a criterion for the
lifting of the {\em joining primeness property} introduced in
\cite{Le-Pa-Ro}, and from this that we can produce examples of
systems with the joining primeness property which are not distally
simple.

In the course of proving these results, we are naturally drawn
into an examination of certain relatively finite measure
preserving extensions of underlying non-singular base systems.  In
this setting we make use of some analogs of basic results from the
theory of finite-rank modules associated to an extension of
measure-preserving systems (see, for example, Chapter 9 of
Glasner~\cite{Gla03}), whose proofs turn out to be easily
transplantable to the setting of a non-singular base and which may
be of some independent interest.

\section{Relatively finite measure-preserving factors of singular
automorphisms}

The later sections of this paper will rely on some basic notions
concerning not only measure-preserving systems, but also certain
non-singular systems. We denote by $\Aut\xbm$ the group of all
non-singular automorphisms (considered up to almost-everywhere
agreement) of a standard Borel probability space $\xbm$.

Assume that $T\in\Aut\xbm$. Denote by $\cp(X)$ the set of
probability measures on $(X,\cb)$. Let
$$
\alpha(x)=\frac{d\mu\circ T}{d\mu}(x)$$ denote the Radon-Nikod\'ym
derivative of $\mu\circ T$ with respect to $\mu$ (the
non-singularity of $T$ gives that $\mu\circ T\sim\mu$). Let
$$\cp_\alpha=\{\lambda\in\cp(X):\:\frac{d\la\circ
T}{d\la}(x)=\alpha(x)\;\;\lambda-\mbox{a.e.}\}.$$ Then
$\mu\in\cp_\alpha$ and $\cp_\alpha$ is a simplex whose subset of
extremal points $\ce_\alpha \subset \cp_\alpha$ contains precisely
the ergodic members of $\cp_\alpha$.

Each measure $\nu\in \cp_\alpha$ gives rise to a non-singular
automorphism $(T,X,\cb,\nu)$.  As is well-known, the simplex
$\cp_\alpha$ now appears in the ergodic decomposition of
$(T,\nu)$.

\begin{Th}[\cite{Gr-Sch}]\label{decomp} If $\nu\in\cp_\alpha$ then the ergodic
decomposition of $(T,\nu)$ is given by \beq\label{decomp1}
\nu=\int_{\cp_\alpha}\epsilon\,dQ(\epsilon)\eeq for some
probability measure $Q$ on $\cp_\alpha$. \bez
\end{Th}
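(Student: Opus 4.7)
The plan is to derive the statement from the abstract ergodic decomposition theorem for a single non-singular automorphism on a standard Borel space, and then check that the resulting ergodic components automatically inherit the Radon-Nikod\'ym derivative $\alpha$, so that $Q$ must be concentrated on $\cp_\alpha$.

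First I would invoke the standard fact that on the standard Borel space $\xbm$ the $T$-invariant sub-$\sigma$-algebra $\ci\subset\cb$ admits a regular conditional probability with respect to $\nu$; this produces a measurable assignment $x\mapsto\epsilon_x\in\cp(X)$ with
\beq
\nu=\int\epsilon_x\,d\nu(x),
\eeq
where for $\nu$-almost every $x$ the measure $\epsilon_x$ is non-singular for $T$ and ergodic. Pushing $\nu$ forward under $x\mapsto\epsilon_x$ yields the probability $Q$ on $\cp(X)$ and rewrites the decomposition in the form asserted by the theorem. What remains is the statement that $Q$ is concentrated on $\cp_\alpha$.

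The key step is to establish $d(\epsilon\circ T)/d\epsilon=\alpha$ for $Q$-almost every $\epsilon$ by conditioning the defining identity $\int f\circ T\,d\nu=\int f\cdot\alpha\,d\nu$ on $\ci$. For any $I\in\ci$ one has $T^{-1}I=I$, so any bounded $\ci$-measurable $h$ satisfies $h\circ T=h$ $\nu$-a.e., and hence for every bounded Borel $f$,
\beq
\int h\cdot(f\circ T)\,d\nu=\int(hf)\circ T\,d\nu=\int h\cdot f\cdot\alpha\,d\nu.
\eeq
This forces the $\ci$-conditional expectations (with respect to $\nu$) of $f\circ T$ and $f\cdot\alpha$ to agree; substituting the disintegration gives $\int f\circ T\,d\epsilon=\int f\cdot\alpha\,d\epsilon$ for $Q$-a.e.\ $\epsilon$. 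Letting $f$ run through a countable determining family of bounded Borel functions simultaneously then yields $\epsilon\in\cp_\alpha$ for $Q$-a.e.\ $\epsilon$, so the ergodic components do indeed share the cocycle $\alpha$.

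The main obstacle is the careful setup of the non-singular ergodic decomposition itself, in particular the joint measurability of $(x,A)\mapsto\epsilon_x(A)$ and the verification that $\nu$-almost every $\epsilon_x$ is ergodic, since the base measure is only quasi-invariant rather than invariant and the familiar mean-ergodic arguments do not apply directly. Once that machinery is in hand, the inheritance of the Radon-Nikod\'ym derivative by the components is an automatic consequence of the functional identity above, and identifying each $\epsilon$ with an extremal point of $\cp_\alpha$ then follows from its ergodicity together with the simplex description of $\cp_\alpha$ recalled in the excerpt.
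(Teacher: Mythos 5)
This theorem is cited from Greschonig--Schmidt~\cite{Gr-Sch} and the paper itself gives no proof, so there is no internal argument to compare your proposal against.

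Your proposal is sound as a reduction. The conditioning computation is correct: for $\ci$-measurable bounded $h$, one indeed has $h\circ T=h$ $\nu$-a.e., whence
\[
\int h\cdot (f\circ T)\,d\nu=\int (hf)\circ T\,d\nu=\int h\cdot f\cdot\alpha\,d\nu,
\]
which forces $\E(f\circ T\mid\ci)=\E(f\alpha\mid\ci)$ $\nu$-a.e., i.e.\ $\int f\circ T\,d\epsilon_x=\int f\cdot\alpha\,d\epsilon_x$ for a.e.\ conditional measure $\epsilon_x$; running $f$ through a countable measure-determining family then gives $\epsilon_x\in\cp_\alpha$ a.e., and pushing $\nu$ forward along $x\mapsto\epsilon_x$ produces the measure $Q$ on $\cp_\alpha$. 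The one caveat is that the step you yourself flag as ``the main obstacle'' --- that a regular conditional probability over $\ci$ exists for a merely quasi-invariant $\nu$ and that $\nu$-a.e.\ conditional measure is both non-singular and \emph{ergodic} for $T$ --- is precisely the substance of the cited Greschonig--Schmidt theorem, and is why the paper cites it rather than proving it. So what you have written is a clean derivation of the specific formulation in terms of $\cp_\alpha$ from the general non-singular ergodic decomposition; this is a legitimate way to organize the argument, but one should be clear that the existence and a.e.-ergodicity of the disintegration is being taken as a black box rather than established, and that black box carries essentially all the analytic difficulty.
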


Returning to our original non-singular system $(T,X,\cb,\mu)$,
assume that $\ca\subset\cb$ is an invariant $\sigma$-algebra (a
{\em factor} of $T$). Following \cite{Da-Le} we say that the
factor $T|_{\ca}$ is {\em relatively finite measure-preserving}
({\em r.f.m.p.}) if $\alpha = \frac{d\mu\circ T}{d\mu}$ is
$\ca$-measurable. Let $(S,Y,\cc,\nu)$ be the factor system
$(T|_\ca,X/\ca,\ca,\mu|_\ca)$ given by $\ca$ and let $\pi:X\to Y$
be the factor map. Then the fact that $S$ is an r.f.m.p.\ factor
of $T$ is expressed by the equality (see \cite{Da-Le})
\beq\label{rfmp2} \mu_{S\pi(x)}=\mu_{\pi(x)}\circ T^{-1},\eeq
where $\mu=\int_{Y}\mu_y\,d\nu(y)$ is the distintegration of $\mu$
over $\nu$.

Notice that in the ergodic decomposition~(\ref{decomp1}) the
Radon-Nikod\'ym derivative of $Q$-a.e.\ ergodic component $\epsilon$
is also $\alpha$.  From this we can quickly deduce the following.

\begin{Cor}\label{rfmp1}
If $T|_\ca$ is ergodic then for $Q$-a.e.\ ergodic component
$\epsilon$ in~(\ref{decomp1}) the system $(T|_\ca,\mu|_\ca)$ is
also a factor of $(T,X,\cb,\epsilon)$ and moreover the factor $\ca
\subset \cb$ is also r.f.m.p. for $(T,X,\cb,\epsilon)$.
\end{Cor}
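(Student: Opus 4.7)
The plan is to exploit the uniqueness of the ergodic decomposition provided by Theorem~\ref{decomp}, applied both to the original system $(T,X,\cb,\mu)$ and to its factor $(T|_\ca,\mu|_\ca)$.

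The r.f.m.p.\ half of the conclusion is immediate: every $\epsilon$ appearing in~(\ref{decomp1}) lies in $\cp_\alpha$ by construction, so $d\epsilon\circ T/d\epsilon=\alpha$, and this Radon-Nikod\'ym derivative is $\ca$-measurable by the standing hypothesis on $\mu$. Hence $\ca\subset\cb$ is automatically r.f.m.p.\ for $(T,X,\cb,\epsilon)$ for \emph{every} such $\epsilon$.

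For the factor claim, I would first verify that for every $\epsilon\in\ce_\alpha$ the restricted measure $\epsilon|_\ca$ is an ergodic member of the analogue of $\cp_\alpha$ attached to the factor system $(T|_\ca,X/\ca,\ca)$. That the Radon-Nikod\'ym derivative computed in the factor is still $\alpha$ follows at once from the $\ca$-measurability of $\alpha$ combined with $d\epsilon\circ T/d\epsilon=\alpha$. Ergodicity of $\epsilon|_\ca$ for $T|_\ca$ descends from that of $\epsilon$ for $T$, since any $T|_\ca$-invariant set in $\ca$ is \emph{a fortiori} $T$-invariant in $\cb$ and therefore has $\epsilon$-measure $0$ or $1$. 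Restricting the identity~(\ref{decomp1}) (taken with $\nu=\mu$) to the sub-$\sigma$-algebra $\ca$ now displays
$$\mu|_\ca=\int\epsilon|_\ca\,dQ(\epsilon)$$
as an integral of ergodic elements of the simplex belonging to $(T|_\ca,\mu|_\ca)$. Applying Theorem~\ref{decomp} to this factor system and invoking the uniqueness of the ergodic decomposition, together with the hypothesis that $\mu|_\ca$ is itself $T|_\ca$-ergodic, forces the pushforward of $Q$ under $\epsilon\mapsto\epsilon|_\ca$ to be the point mass at $\mu|_\ca$. Hence $\epsilon|_\ca=\mu|_\ca$ for $Q$-a.e.\ $\epsilon$, which is precisely the assertion that the identity on $X$ is a factor map $(T,X,\cb,\epsilon)\to(T|_\ca,\mu|_\ca)$.

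The one place where genuine care is needed is in the routine measure-theoretic bookkeeping: the measurability in $\epsilon$ of the restriction $\epsilon\mapsto\epsilon|_\ca$ (so that the displayed integral is meaningful as a decomposition), and the descent of ergodicity and of the Radon-Nikod\'ym derivative to the factor. Both are standard on the standard Borel setup assumed here but deserve explicit mention.
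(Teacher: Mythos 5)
Your proof is correct, and it takes a route that is close to, but more self-contained than, the paper's. The paper dispatches the factor claim in one sentence by citing formula~(\ref{rfmp2}) (the equivariance $\mu_{S\pi(x)}=\mu_{\pi(x)}\circ T^{-1}$ of the disintegration, established in \cite{Da-Le}) together with uniqueness of ergodic decomposition. You instead avoid (\ref{rfmp2}) entirely and argue directly inside the factor simplex: each $\epsilon|_\ca$ lies in the analogue of $\cp_\alpha$ for $(T|_\ca,\ca)$ (because $\alpha$ is $\ca$-measurable), ergodicity descends from $\epsilon$ to $\epsilon|_\ca$, and then the extremality of $\mu|_\ca$ (equivalently, uniqueness of ergodic decomposition applied to the factor) forces the pushforward of $Q$ under $\epsilon\mapsto\epsilon|_\ca$ to be $\delta_{\mu|_\ca}$. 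This buys a proof that does not presuppose the disintegration formula and makes the dependence on $T|_\ca$ being ergodic transparent; the paper's route is shorter but leans on an externally established equivariance. Both ultimately rest on the same engine (uniqueness of the ergodic decomposition of Theorem~\ref{decomp}), and the r.f.m.p.\ half is handled identically in both, via the observation that the Radon-Nikod\'ym derivative of each $\epsilon$ is still the $\ca$-measurable function $\alpha$.
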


\begin{proof}
We only need to prove that the action of $T$ on $\ca$ with the
measure $\mu|_\ca$ is a factor of $(T,\epsilon)$. As remarked in
\cite{Da-Le}, this follows directly from the formula~(\ref{rfmp2})
and the uniqueness of ergodic decomposition.
\end{proof}

We now recall a non-singular version of Rokhlin's Theorem (see
\cite{Ra}). Let $T$ be an ergodic non-singular automorphism of a
standard probability Borel space $\xbm$ and let $S$ acting on
$\ycn$ be its factor (given by an invariant sub-$\sigma$-algebra
$\ca\subset\cb$). Then, up to measure space isomorphism,
$\xbm=\ycn\otimes\zdr$ for another standard Borel space $\zdr$ and
in these new ``coordinates'' we can express $T=S_\Theta$, where
$$
S_\Theta(y,z)=(Sy,\Theta(y)(z))$$ and $\Theta:Y\to \Aut\zdr$ is a
Rokhlin cocycle with values in the group of non-singular
automorphisms of $\zdr$. The automorphism $S_\Theta$ has still $S$
as its factor. If $S$ was an r.f.m.p.\ factor of $T$, it is now an
r.f.m.p.\ factor of $S_\Theta$. However the disintegration of
$\nu\ot\rho$ over $\nu$ is trivial and now the
equality~(\ref{rfmp2}) asserts that $\rho=\rho\circ
\Theta(y)^{-1}$ for $\nu$-a.e.\ $y\in Y$ (that is, the fiber
automorphisms are measure-preserving).

\begin{Cor}\label{rfmpwn} Under the above notation, if $T$ is ergodic and $S$ is an
r.f.m.p.\ factor  then (up to isomorphism) in the skew product
representation of $T$ over $S$ the Rokhlin cocycle $\Theta$ takes
values in the group $\Aut_0\zdr$ of measure-preserving automorphisms
of $\zdr$.\bez
\end{Cor}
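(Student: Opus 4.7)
The plan is to apply the non-singular Rokhlin theorem (as recalled in the paragraph preceding the statement) to realise $T$ as a skew product over $S$, and then to read off directly from the r.f.m.p.\ condition (\ref{rfmp2}) that the fibre maps must preserve the fibre measure $\rho$. First I would invoke the non-singular Rokhlin theorem to find, up to measure-space isomorphism, a standard Borel space $\zdr$ and a Rokhlin cocycle $\Theta:Y\to\Aut\zdr$ so that $\xbm\cong\ycn\ot\zdr$ with $T=S_\Theta$ given by $S_\Theta(y,z)=(Sy,\Theta(y)(z))$. A priori $\Theta(y)$ is only known to be a non-singular automorphism of $\zdr$, and the task is to upgrade this to measure preservation $\nu$-almost surely.

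Next I would transport (\ref{rfmp2}) into these coordinates. Since in the new identification $\mu=\nu\ot\rho$ on $Y\times Z$ and the factor map is $\pi(y,z)=y$, the disintegration of $\mu$ over $\nu$ is the trivial product one: under the natural identification of the fibre $\pi^{-1}(y)=\{y\}\times Z$ with $Z$, the conditional measure $\mu_y$ is simply $\rho$. Using $T^{-1}(Sy,z')=(y,\Theta(y)^{-1}(z'))$, the r.f.m.p.\ equation $\mu_{Sy}=\mu_y\circ T^{-1}$ therefore reduces, after restricting to the fibre over $Sy$, to the identity $\rho=\rho\circ\Theta(y)^{-1}$, valid for $\nu$-a.e.\ $y\in Y$. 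This is precisely the statement that $\Theta(y)\in\Aut_0\zdr$ for $\nu$-a.e.\ $y$.

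Finally, on the $\nu$-null exceptional set of $y$ for which $\Theta(y)$ fails to preserve $\rho$ I can redefine $\Theta(y)$ to be the identity of $Z$ without altering the almost-everywhere identification of $T$ with the skew product, yielding a Rokhlin cocycle taking values in $\Aut_0\zdr$. There is no real obstacle in the proof; the content of the corollary is just the precise unpacking of (\ref{rfmp2}) in skew-product coordinates, exploiting that the disintegration is trivial because the extension has been written as a product of measure spaces, as already indicated in the paragraph preceding the statement.
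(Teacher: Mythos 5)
Your proposal is correct and follows exactly the paper's own argument: invoke the non-singular Rokhlin representation $\xbm\cong\ycn\ot\zdr$, observe that the disintegration of $\nu\ot\rho$ over $\nu$ is the constant one $\mu_y\equiv\rho$, and then read off from equation (\ref{rfmp2}) that $\rho=\rho\circ\Theta(y)^{-1}$ for $\nu$-a.e.\ $y$. The only addition is the (harmless and standard) remark about redefining $\Theta$ on a null set, which the paper leaves implicit.
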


\section{Finite-rank modules over a non-singular base}

Our later proofs will make use of a version of the theory of
invariant finite-rank modules over a factor, adapted to the setting
of an r.f.m.p. extension of a non-singular base system. This
machinery is well-known as applied to extensions of
measure-preserving systems, and much of it can simply be carried
over to our scenario unchanged.  For this purpose we refer the
reader to Chapter 9 of Glasner's book~\cite{Gla03} for a thorough
account, and will first recall some basic results from there.  Let
us stress that in this section we will make repeated appeal to the
fact that our transformation is ergodic.  It seems likely that an
analogous theory can be developed without this restriction (as has
been done for the measure-preserving case in~\cite{Aus--dirint}),
but we do not explore this rather technical subject here.  Later we
will apply the results of this section to the f.m.p. extensions
described in the Introduction in a somewhat complicated way; in
order to lighten our notation, for the duration of the present
section we will instead let $T$ be a non-singular transformation on
$\xbm$ that is r.f.m.p. over $\ca \subset \cb$, with the warning
that this does not correspond to the transformation $T$ discussed in
the Introduction.

Given a finite measure space $\xbm$ and a sub-$\sigma$-algebra
$\ca \subset \cb$, an {\em $\ca$-module} is a subspace $M \subset
L^2\xbm$ such that whenever $f\in M$ and $h \in L^\infty\xbm$ is
$\ca$-measurable, then also $h\cdot f\in M$.

Our picture of such modules becomes rather clearer when we invoke
the Rokhlin representation of $\ca \subset \cb$.  For the
remainder of this section let us write $\xbm = \ycn\otimes \zdr$,
so that our transformation $T$ becomes $S_\Theta$ as above.  We
can now clearly identify the $\ca$-measurable members of
$L^\infty\xbm$ with $L^\infty\ycn$, and we will henceforth
abusively write simply that $L^\infty\ycn \subset L^\infty\xbm$.
Writing $\pi:X\to Y$ for the canonical factor map, we will also
refer instead to a $\pi$-module.

A $\pi$-module $M$ can now be easily identified with a measurable
bundle of Hilbert subspaces $M_y \subset L^2\zdr$ indexed by $y \in
Y$, so that $M$ is the space of measurable sections of this bundle.
This module is of {\em finite-rank} if ${\rm dim}\,M_y < \infty$
almost everywhere, and is of {\em rank $r$} if ${\rm dim}\,M_y = r$
almost everywhere. For this special class of modules we have the
following relativized ability to select orthonormal bases.

\begin{Lemma}[Lemma 9.4 of~\cite{Gla03}]
If $M$ is a $\pi$-module of rank $r < \infty$ then there is an
$r$-tuple $\phi_1$, $\phi_2$, \ldots, $\phi_r$ of members of $M$
such that $\E(\phi_i\ov{\phi}_j\,|\,\ca) \equiv \delta_{ij}$ for
$1\leq i,j \leq r$ and such that any $f \in M$ can be expressed as
$\sum_{i=1}^r h_i\cdot \phi_i$ for some $h_1$, $h_2$, \ldots, $h_r
\in L^\infty\ycn$.

We refer to such a tuple of functions as a {\em fiberwise
orthonormal basis of $M$}. \bez
\end{Lemma}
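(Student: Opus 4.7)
The plan is to carry out a measurable, fiberwise Gram--Schmidt procedure over $Y$. Because $T=S_\Theta$ is r.f.m.p.\ over $S$, Corollary~\ref{rfmpwn} lets us work in the representation $\xbm=\ycn\otimes\zdr$ in which every $\Theta(y)$ preserves $\rho$; the disintegration of $\mu$ over $\nu$ reduces to the constant fiber measure $\rho$, and so the conditional expectation $\E(\,\cdot\,|\ca)$ is computed fiberwise by integration against $\rho$, and the fiber Hilbert spaces are all the same $L^2\zdr$. In particular, $\|\phi(y,\cdot)\|_{L^2(\rho)}^2=\E(|\phi|^2\,|\,\ca)(y)$ for any $\phi\in L^2\xbm$, so the required identity $\E(\phi_i\ov{\phi_j}\,|\,\ca)\equiv\delta_{ij}$ is precisely that $\phi_1(y,\cdot),\ldots,\phi_r(y,\cdot)$ is orthonormal in $L^2(\rho)$ for $\nu$-a.e.\ $y$.

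First, fix a countable sequence $(f_n)_{n\ge1}$ whose $L^\infty\ycn$-linear combinations are dense in $M$. Construct $\phi_1$ as follows: let $n_1(y)$ be the least $n$ with $\|f_n(y,\cdot)\|_{L^2(\rho)}>0$, which is $\ca$-measurable. On each $\ca$-measurable atom $\{n_1=n\}$, partition further according to dyadic intervals of $\|f_n(y,\cdot)\|_{L^2(\rho)}$ and set $\phi_1:=f_n/\|f_n(y,\cdot)\|_{L^2(\rho)}$ on each such piece; on every piece the scaling factor is $\ca$-measurable and bounded, so the resulting $\phi_1$ is a countable sum of $L^\infty\ycn$-multiples of $f_n$'s and, since $\|\phi_1\|_{L^2\xbm}=1$ and $M$ is a closed subspace of $L^2\xbm$, lies in $M$.

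Inductively, given $\phi_1,\ldots,\phi_k$ with $k<r$, form $g_n:=f_n-\sum_{i\le k}\E(f_n\ov{\phi_i}\,|\,\ca)\phi_i\in M$ and repeat the construction of the previous paragraph applied to the $g_n$'s to produce $\phi_{k+1}$, which is fiberwise orthogonal to $\phi_1,\ldots,\phi_k$ by construction. The hypothesis $\dim M_y=r$ for $\nu$-a.e.\ $y$ guarantees that at each stage $k<r$ the set $\{y:g_n(y,\cdot)\ne 0\text{ for some }n\}$ has full $\nu$-measure, so the procedure does not stall before reaching $r$. After $r$ steps the tuple $(\phi_1(y,\cdot),\ldots,\phi_r(y,\cdot))$ is an orthonormal basis of $M_y$ for $\nu$-a.e.\ $y$.

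For the spanning assertion, given $f\in M$ put $h_i:=\E(f\ov{\phi_i}\,|\,\ca)$; then $f-\sum_{i=1}^r h_i\phi_i$ belongs to $M$ and its restriction to $\nu$-a.e.\ fiber is orthogonal to every $\phi_j(y,\cdot)$, hence vanishes by the basis property. The only real obstacle is the statement that the coefficients $h_i$ lie in $L^\infty\ycn$ rather than merely being $\ca$-measurable (a priori $h_i\in L^2(\nu)$, by Cauchy--Schwarz on each fiber); the same dyadic-partition device used for $\phi_1$ addresses this on each piece of an appropriate $\ca$-measurable partition of $Y$, and this is the step where all the bookkeeping lives. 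The rest of the argument is formal Hilbert-space manipulation inside each fiber, transplanted unchanged from the measure-preserving proof of Lemma 9.4 of~\cite{Gla03}.
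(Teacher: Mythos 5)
The paper itself does not prove this lemma: it is stated with a citation to Lemma 9.4 of Glasner's book and closed with the end-of-proof symbol, so there is no internal proof to compare against. That said, your fiberwise Gram--Schmidt argument is the standard route (and, to my knowledge, the one in~\cite{Gla03}), and the bulk of it is sound: the Rokhlin product representation makes $\E(\,\cdot\,|\ca)$ act by fiber integration against the fixed measure $\rho$, the dyadic-partition trick correctly produces a unit-fiber-norm $\phi_1\in M$ from a countable generating family (using that $M$ is closed in $L^2\xbm$, which the paper's definition leaves implicit but which is clearly intended), and the inductive orthogonalization and termination argument via $\dim M_y = r$ are correct.

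There is, however, one genuine gap, and it sits exactly where you flagged the ``bookkeeping''. The dyadic-partition device does \emph{not} rescue the final spanning claim. For a fixed $f\in M$ the coefficients $h_i(y)=\langle f(y,\cdot),\phi_i(y,\cdot)\rangle_{L^2(\rho)}$ satisfy $\sum_i|h_i(y)|^2=\|f(y,\cdot)\|^2_{L^2(\rho)}$; partitioning $Y$ into dyadic level sets of this quantity makes each $h_i$ bounded \emph{on each piece}, but gluing the pieces back together just reproduces the original unbounded $h_i$. In fact, if $M$ is merely required to be a closed $L^\infty\ycn$-submodule of $L^2\xbm$ with $\dim M_y=r$ a.e., the assertion $h_i\in L^\infty\ycn$ is simply false: taking $r=1$, $M=\overline{L^\infty\ycn\cdot\phi}$ and $h\in L^2(\nu)\setminus L^\infty(\nu)$, the element $h\phi\in M$ has no bounded-coefficient representation. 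What is really needed is that $M\subset L^\infty_\ca L^2\xbm$, i.e.\ that fiber norms of elements of $M$ are essentially bounded; then $\|h_i\|_\infty\le\|f\|_{L^\infty_\ca L^2}$ and the spanning statement follows with no extra partitioning. That hypothesis is implicit in the way the lemma is used (and, for invariant modules over an ergodic base, it is exactly the content of Proposition~\ref{prop:finite-rank-ess-bdd}, which in turn relies on this lemma only for the \emph{existence} of the fiberwise orthonormal tuple, not for the $L^\infty$ spanning claim), but your proof should either assume it outright or restate the conclusion with $h_i\in L^2(\nu)$. The same caveat affects your inductive step: to guarantee $g_n=f_n-\sum_{i\le k}\E(f_n\ov{\phi_i}|\ca)\phi_i\in M$ via the module property you need $\E(f_n\ov{\phi_i}|\ca)\in L^\infty\ycn$, which you can arrange by pre-truncating the $f_n$ to have bounded fiber norms (this costs nothing for density, and you should say so explicitly).
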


Our interest will be in finite-rank $\pi$-modules that are invariant
under $S_\Theta$.  In order to work with these in the setting of a
non-singular base system $(S,Y,\cc,\nu)$ we first introduce a more
refined residence for them within $L^2\xbm$.

\begin{Def}\em
We write $L^\infty_\ca L^2\xbm$ for the subspace of those measurable
functions $f \in L^2\xbm$ such that the conditional expectation
satisfies $\E(f\,|\,\ca) \in L^\infty(\mu|_\ca)$.  We equip this
function space with the norm
\[\|f\|_{L^\infty_\ca L^2} := \|\E(|f|^2\,|\,\ca)\|_\infty.\]

Under the identification $\xbm = \ycn\otimes\zdr$, it is clear that
$L^\infty_\ca L^2\xbm$ is identified with
$L^\infty(Y,\cc,\nu;L^2\zdr)$.
\end{Def}

Now, we have represented the transformation $T$ as $S_\Theta$, and
so its action on $L^\infty(Y,\cc,\nu;L^2\zdr)$ can be written as
\[\xi\circ S_\Theta(y,z) = \xi(Sy,\Theta(y)(z)) =
U_{\Theta(y)}(\xi(Sy,\,\cdot\,))(z),\]
where $U_{\Theta(y)}\in U(L^2\zdr)$ is the unitary operator
associated to the transformation $\Theta(y) \in \Aut_0\zdr$ by the
Koopman representation.  In order to work with this
representation, given a function $f \in L^2\xbm$ we will sometimes
write $f|_y$ for the restriction $f(y,\,\cdot\,)$ to the fiber
$\pi^{-1}\{y\}$, regarded as a member of $L^2\zdr$.

The following is now immediate from this representation.

\begin{Lemma}
A $\pi$-submodule $M \subset L^2\xbm$ is $S_\Theta$-invariant if and
only if $M_{Sy} = U_{\Theta(y)}(M_y)$ for almost every $y$. \bez
\end{Lemma}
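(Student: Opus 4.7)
The plan is to unpack $S_\Theta$-invariance fiber by fiber so that the lemma reduces to reading off the Koopman identity displayed immediately before it.

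First I would recall that a $\pi$-submodule $M \subset L^2\xbm$ is exactly the space of measurable sections of its associated bundle $\{M_y\}_{y \in Y}$: $f \in M$ iff $f|_y \in M_y$ for $\nu$-a.e.\ $y$. Combined with the displayed formula $(f\circ S_\Theta)|_y = U_{\Theta(y)}(f|_{Sy})$, this identifies the image $U_{S_\Theta}(M)$ as the $\pi$-submodule whose fiber at $y$ is $U_{\Theta(y)}(M_{Sy})$. Hence $S_\Theta$-invariance of $M$ translates, via equality of bundles, into the fiberwise identity $M_y = U_{\Theta(y)}(M_{Sy})$ for $\nu$-a.e.\ $y$, which after rearrangement is the statement of the lemma. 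Moreover, since each $U_{\Theta(y)}$ is unitary and the two fibers share a common rank, only one inclusion needs to be checked (for instance $U_{\Theta(y)}(M_{Sy}) \subset M_y$ a.e., which is immediate from $U_{S_\Theta}(M) \subset M$); a dimension count then upgrades it to equality.

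The only step that requires a touch of care is the quantifier over $f$: for each individual $f \in M$ the Koopman identity $(f\circ S_\Theta)|_y = U_{\Theta(y)}(f|_{Sy})$ holds outside an $f$-dependent $\nu$-null set, yet we need a single conull set on which it holds for \emph{every} $f \in M$. I would handle this by fixing a countable generating set for $M$ --- for instance a fiberwise orthonormal basis $\phi_1,\ldots,\phi_r$ furnished by the previous lemma together with a countable $L^\infty\ycn$-dense family of coefficients --- intersecting the countably many exceptional null sets, and reading off the equality of bundles on the resulting conull set. This bookkeeping is the main (and essentially the only) obstacle, but it is routine.
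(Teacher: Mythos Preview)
The paper gives no proof here: the lemma is declared ``immediate from this representation'' and closed with a QED box. Your unwinding of the fiberwise Koopman identity is exactly that immediate computation, so the approaches coincide.

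Two small remarks. First, your ``after rearrangement'' step does not literally land on the stated formula: from $M_y = U_{\Theta(y)}(M_{Sy})$ one gets $M_{Sy} = U_{\Theta(y)}^{-1}(M_y)$, not $M_{Sy} = U_{\Theta(y)}(M_y)$. In fact the identity you actually derive, $U_{\Theta(y)}(M_{Sy}) = M_y$, is precisely what the paper uses in the proof of Proposition~\ref{prop:finite-rank-ess-bdd}, so the discrepancy is a misprint in the lemma rather than a flaw in your argument. Second, the lemma is stated for arbitrary $\pi$-submodules, so neither the dimension count nor the fiberwise orthonormal basis from the preceding lemma is available in general; but neither is needed, since two-sided invariance $U_{S_\Theta}(M) = M$ already supplies both fiber inclusions, and any countable $L^2$-dense family in $M$ handles the null-set bookkeeping.
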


\begin{Prop}\label{prop:finite-rank-ess-bdd}
If $M \subset L^2\xbm$ is an $S_\Theta$-invariant finite-rank module
over $\ca \subset \cb$, then its rank is almost surely constant on
$Y$, and if $\phi_1$, $\phi_2$, \ldots, $\phi_r$ is a fiberwise
orthonormal basis for $M$ then $\sum_{j=1}^r|\phi_j(x)|^2$ is almost
surely equal to $r$. As a result any such $M$ is contained in
$L^\infty\xbm$.
\end{Prop}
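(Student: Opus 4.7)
My plan has three moves, driven throughout by the ergodicity of $T=S_\Theta$ (which also forces its factor $S$ to be ergodic).

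First, for the constancy of the rank, I would use the preceding lemma: the identity $M_{Sy}=U_{\Theta(y)}(M_y)$ holds $\nu$-a.e.~in $y$, and since each $U_{\Theta(y)}$ is a unitary on $L^2\zdr$ it preserves dimension. Hence $y\mapsto \dim M_y$ is an $S$-invariant measurable function on $Y$, and the ergodicity of $S$ makes it $\nu$-a.e.~constant, equal to some $r$.

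The heart of the argument is then the pointwise identity $\sum_{j=1}^r|\phi_j(x)|^2\equiv r$. The key observation is that if $(\psi_1,\ldots,\psi_r)$ is any orthonormal basis of a finite-dimensional subspace $N\subset L^2\zdr$, then the integral kernel $K_N(z,z'):=\sum_j \psi_j(z)\overline{\psi_j(z')}$ representing the orthogonal projection onto $N$ depends only on $N$ and not on the basis chosen, so in particular its diagonal value $K_N(z,z)=\sum_j|\psi_j(z)|^2$ is basis-independent. Now apply this to $M_y$ with its two orthonormal bases $(\phi_j|_y)_{j=1}^r$ and $(U_{\Theta(y)}\phi_j|_{Sy})_{j=1}^r$ (the second one being an o.n.~basis of $M_y$ precisely because $U_{\Theta(y)}$ is a unitary sending $M_{Sy}$ onto $M_y$). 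Evaluating the diagonal at $z$ gives
\[
\sum_{j=1}^r|\phi_j(y,z)|^2 \;=\; \sum_{j=1}^r|\phi_j(Sy,\Theta(y)z)|^2,
\]
that is, $\Phi:=\sum_j|\phi_j|^2$ satisfies $\Phi=\Phi\circ S_\Theta$ $\mu$-a.e. By ergodicity of $S_\Theta$ the function $\Phi$ is $\mu$-a.e.~equal to a constant $c$; but since $\E(\Phi\,|\,\ca)=\sum_j\E(|\phi_j|^2\,|\,\ca)\equiv r$ by the defining property of a fiberwise orthonormal basis, this constant must equal $r$.

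Finally, for the essential boundedness, I would invoke the previous lemma to write any $f\in M$ as $f=\sum_{j=1}^r h_j\cdot\phi_j$ with $h_j\in L^\infty\ycn$, and then apply the fiberwise Cauchy--Schwarz inequality:
\[
|f(x)|^2 \;\le\; \Bigl(\sum_{j=1}^r|h_j(\pi(x))|^2\Bigr)\Bigl(\sum_{j=1}^r|\phi_j(x)|^2\Bigr) \;=\; r\sum_{j=1}^r|h_j(\pi(x))|^2,
\]
which is essentially bounded since each $h_j$ is. The step I expect to require the most care is the second one: one must be sure that although $U_{\Theta(y)}$ is only defined almost everywhere in $y$, the two bases $(\phi_j|_y)_j$ and $(U_{\Theta(y)}\phi_j|_{Sy})_j$ genuinely span the same fiber $M_y$ for a conull set of $y$, so that invoking basis-independence of the reproducing kernel at the level of fibers is legitimate and yields a genuine $S_\Theta$-invariant function on $X$.
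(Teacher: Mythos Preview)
Your proof is correct and follows essentially the same route as the paper: constancy of rank by ergodicity of $S$, then $S_\Theta$-invariance of $\sum_j|\phi_j|^2$ by a unitarity argument on the fibers, then ergodicity of $S_\Theta$ to pin down the constant as $r$, and finally essential boundedness by combining $|\phi_j|^2\le r$ with the $L^\infty\ycn$-expansion. The only cosmetic difference is that the paper writes out the change-of-basis unitary matrix $(U_{ij}(y))$ explicitly (this cocycle is later named the \emph{relative eigenvalue}) and computes $\sum_i\bigl|\sum_j U_{ij}\phi_j|_y\bigr|^2=\sum_j|\phi_j|_y|^2$ directly, whereas you package the same computation as basis-independence of the diagonal of the reproducing kernel of $M_y$; these are equivalent.
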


\begin{proof}
The assertion that ${\rm dim}\,M_y$ is almost surely constant
follows simply because on the one hand $M_y$ and hence also ${\rm
dim}\,M_y$ vary measurably with $y$, but on the other ${\rm
dim}\,M_{Sy} = {\rm dim}\,(U_{\Theta(y)}(M_y)) = {\rm dim}\,M_y$ for
almost every $y$, and so ${\rm dim}\,M_y$ is almost surely invariant
under the transformation $S$, which we have assumed is ergodic.

This constancy of ${\rm dim}\,M_y$ now allows us to pick a
fiberwise orthonormal basis $\phi_1$, $\phi_2$, \ldots, $\phi_r$.
In terms of this, we know that
\[U_{\Theta(y)}({\rm span}\{\phi_1|_{Sy},\phi_2|_{Sy},\ldots,\phi_r|_{Sy}\}) = {\rm span}\{\phi_1|_y,\phi_2|_y,\ldots,\phi_r|_y\},\]
and so the unitary cocycle $U_{\Theta(\,\cdot\,)}$ specializes to
give a measurable family of $r\times r$ unitary matrices
$(U_{ij}(y))_{1 \leq i,j \leq r}$ such that
\[U_{\Theta(y)}(\phi_i|_{Sy})(z) = \sum_{j=1}^r U_{ij}(y)\phi_j|_y(z)\]
$\nu\otimes\rho$-almost surely.  However, the left-hand side of
this equation is simply $\phi_i(Sy,\Theta(y)(z))$, and so squaring
and summing in $i$ we obtain
\[\sum_{i=1}^r|\phi_i(Sy,\Theta(y)(z))|^2 = \sum_{i=1}^r\Big|\sum_{j=1}^r U_{ij}(y)\phi_j|_y(z)\Big|^2 = \sum_{j=1}^r|\phi_j(y,z)|^2,\]
by the unitarity of $(U_{ij}(y))_{1 \leq i,j \leq r}$.  It follows
that the expression $\sum_{j=1}^r|\phi_j(y,z)|^2$ is invariant
under $S_\Theta = T$, which we also assumed ergodic, and so is
almost surely constant.  Since by definition
$\int_Z|\phi_i(y,z)|^2\,\rho(dz) = 1$ for each $i=1,2,\ldots,r$,
by integrating over $z \in Z$ we see that this constant must equal
$r$.

It follows in particular that each $\phi_i$ is bounded in $L^\infty$
by this constant, and now it is immediate that membership of
$L^\infty$ persists under taking finite $L^\infty\ycn$-linear
combinations.
\end{proof}

\begin{Remark}
\em The unitary cocycle $U_{ij}$ constructed in the course of the
above proof is often referred to as the {\em relative eigenvalue}
associated to the finite-rank module $M$. Note that different
choices of basis for $M$ can give rise to different relative
eigenvalues, but they are always cohomologous.
\end{Remark}

Finite-rank modules are lent particular importance in ergodic
theory by their r\^ole in the classical dichotomy proved by
Furstenberg~\cite{Fur77} and Zimmer~\cite{Zim76.1,Zim76.2} between
relatively weakly mixing extensions and those containing a
nontrivial subextension that can be coordinatized as a compact
homogeneous skew-product extension. Although we will not need this
result in the present work, its proof for extensions over a
non-singular base is identical to that over a measure-preserving
base, now that we have Proposition~\ref{prop:finite-rank-ess-bdd}
at our disposal, and so we simply state the relevant definitions
and result here for completeness.

\begin{Def}\em
Suppose that $(X,\cb,\mu,T)$ is a non-singular system and $\ca
\subset \cb$ a factor over which it is r.f.m.p.  This factor is
{\em relatively weakly mixing} if the non-singular system
$(X\times X,\cb\otimes \cb,\mu\otimes_\ca\mu,T\times T)$ is
ergodic (this is the usual construction of the relative product
over $\ca$, which is easily seen to give a non-singular system if
$(T,X,\cb,\mu)$ is non-singular; see, for instance,~\cite{Ru-Si}).
On the other hand, it is {\em isometric} if the space
$L^\infty_\ca L^2\xbm$ is spanned by its finite rank $T$-invariant
$\ca$-submodules.
\end{Def}

\begin{Prop}
The system $(T,X,\cb,\mu)$ fails to be relatively weakly mixing over
an r.f.m.p. factor $\ca \subset \cb$ if and only if it contains a
nontrivial isometric subextension of $(T|_\ca,X/\ca,\ca,\mu|_\ca)$.
\bez
\end{Prop}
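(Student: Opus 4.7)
The plan is to adapt the classical proof of the Furstenberg--Zimmer dichotomy (Chapter~9 of~\cite{Gla03}) to the non-singular base. The core equivalence to establish is that failure of relative weak mixing over $\ca$ is the same as existence of a nontrivial $T$-invariant finite-rank $\ca$-submodule of $L^2\xbm$ (that is, one not contained in $L^2(\ca)$). Once this equivalence is in hand, the required isometric subextension is obtained by letting $\cb' \subset \cb$ be the $\sigma$-algebra generated by $\ca$ together with all finite-rank $T$-invariant $\ca$-submodules of $L^2\xbm$; Proposition~\ref{prop:finite-rank-ess-bdd} guarantees each such module lies in $L^\infty\xbm$, so $\cb'$ is a genuine intermediate factor, automatically isometric over $\ca$ by construction, and strictly larger than $\ca$ exactly when the system is not relatively weakly mixing over $\ca$.

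For the easier implication, suppose $M$ is a nontrivial finite-rank $T$-invariant $\ca$-submodule with fiberwise orthonormal basis $\phi_1,\ldots,\phi_r$ and associated unitary cocycle $(U_{ij}(y))$ as constructed in the proof of Proposition~\ref{prop:finite-rank-ess-bdd}. In the Rokhlin coordinates $\xbm = \ycn\ot\zdr$ one forms the fiberwise projection kernel
\[F(y,z,z') \;=\; \sum_{i=1}^r \phi_i(y,z)\ov{\phi_i(y,z')}.\]
The unitarity of $(U_{ij}(y))$ combined with the transformation rule for the $\phi_i$ that was already used in that proof yields that $F$ is $(T\times T)$-invariant on $\mu\ot_\ca\mu$. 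Since the slice $F(y,\cdot,\cdot)$ is the kernel of orthogonal projection onto the proper subspace $M_y \subset L^2\zdr$, the function $F$ is non-constant, and so $T\times T$ fails to be ergodic on the relative product.

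For the harder converse, start from a bounded $(T\times T)$-invariant function $F \in L^2(\mu\ot_\ca\mu)$ that is not $\ca$-measurable; boundedness may be arranged by truncation. Disintegrating over $\ca$ exhibits $F$ as a measurable assignment $y \mapsto K_y$ of Hilbert--Schmidt kernels in $L^2(\rho\ot\rho)$, and the invariance of $F$ translates, via $T = S_\Theta$, into the conjugation relation
\[\wh{K}_{Sy} \;=\; U_{\Theta(y)}\,\wh{K}_y\,U_{\Theta(y)}^*\qquad \mbox{for $\nu$-a.e.\ }y.\]
A fiberwise application of the spectral theorem to the compact self-adjoint operators $\wh{K}_y\wh{K}_y^*$ provides, for any nonzero $\la$ lying in the support of their spectral measures on a positive-measure set of $y$, a measurable bundle of finite-dimensional $\la$-eigenspaces, which by the above relation is carried by $U_{\Theta(y)}$ from the fiber over $y$ to that over $Sy$; assembling these eigenspaces over $y$ yields the required nontrivial finite-rank $T$-invariant $\ca$-submodule of $L^2\xbm$.

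The main obstacle, and essentially the only point at which the non-singular case departs from the measure-preserving template, lies in the $L^2$-to-$L^\infty$ passage in both parts: one needs $F$ bounded so that $\|K_y\|_{L^2(\rho\ot\rho)}$ is controlled uniformly in $y$, and one needs the modules produced at the end of the converse to actually lie in $L^\infty\xbm$, so as to generate an honest r.f.m.p.\ subextension over the non-singular base. Both needs are addressed by the space $L^\infty_\ca L^2\xbm$ introduced earlier together with Proposition~\ref{prop:finite-rank-ess-bdd}, after which the classical arguments from Chapter~9 of \cite{Gla03} transplant to our setting without further modification.
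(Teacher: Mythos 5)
The paper does not actually prove this proposition: it notes in the preceding paragraph that, once Proposition~\ref{prop:finite-rank-ess-bdd} is in hand, the classical Furstenberg--Zimmer argument transplants verbatim from the measure-preserving case, and then simply states the result for completeness. Your sketch reproduces exactly that classical argument (projection kernel for the easy direction, fiberwise Hilbert--Schmidt spectral decomposition for the converse, closure under products to build the maximal isometric factor) and correctly identifies Proposition~\ref{prop:finite-rank-ess-bdd}, together with the unitarity of $U_{\Theta(y)}$ furnished by Corollary~\ref{rfmpwn}, as the only new input the non-singular base requires, so it matches the approach the authors are invoking.
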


For our applications in this paper our particular interest will be
in invariant modules of rank one, whose behaviour exhibits the
following useful feature.

\begin{Prop}\label{prop:fibre-orthog}
Any two distinct rank-one $T$-invariant $\pi$-modules $M$ and $N$
are {\em fiberwise orthogonal}, in that $\E(f\ov{g}\,|\,\ca) = 0$
whenever $f \in M$ and $g \in N$.
\end{Prop}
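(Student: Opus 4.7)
The plan is to select fiberwise unit generators $\phi \in M$ and $\psi \in N$, so that $\E(|\phi|^2|\ca) \equiv 1 \equiv \E(|\psi|^2|\ca)$, then study the fiberwise inner product $g := \E(\phi \ov\psi\,|\,\ca) \in L^\infty\ycn$ and prove it vanishes identically. By the $T$-invariance and the rank-one assumption, there exist $h_\phi,h_\psi \in L^\infty\ycn$ with $\phi\circ S_\Theta = h_\phi\cdot \phi$ and $\psi\circ S_\Theta = h_\psi\cdot \psi$, and since each $U_{\Theta(y)}$ is unitary, $|h_\phi| = |h_\psi| = 1$ almost everywhere; Proposition~\ref{prop:finite-rank-ess-bdd} additionally yields $|\phi| = |\psi| = 1$ almost everywhere.

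The key identity comes from the r.f.m.p.\ property (Corollary~\ref{rfmpwn}): since the fibers of $\Theta$ are measure-preserving, for every $F \in L^1\xbm$ one has $\E(F\circ S_\Theta\,|\,\ca) = \E(F\,|\,\ca)\circ S$. Applying this to $F = \phi\ov\psi$ and using the $\ca$-measurability of $h_\phi$ and $h_\psi$ produces
\[g\circ S \;=\; h_\phi\cdot\ov{h_\psi}\cdot g.\]
Since $|h_\phi\ov{h_\psi}|\equiv 1$ and $S$ is ergodic, $|g|$ is $S$-invariant, hence a.e.\ equal to a constant $c\geq 0$.

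The principal step is to rule out the case $c>0$. If $c>0$, then both $F:=\phi\ov\psi$ (which has modulus $1$ a.e.) and the unit-modulus $\ca$-measurable function $g/c$ transform under $T$ by multiplication by the same factor $h_\phi\ov{h_\psi}$. Consequently $F\cdot \ov{(g/c)}\in L^\infty\xbm$ is $T$-invariant, hence by the ergodicity of $T$ a.e.\ constant. This forces $F=\phi\ov\psi$ itself to be $\ca$-measurable; combined with $|\phi|=|\psi|=1$, this gives $\phi = F_0(y)\cdot \psi$ for some unit-modulus $F_0\in L^\infty\ycn$, so $\phi\in L^\infty\ycn\cdot \psi$ and therefore $M=N$, contradicting the hypothesis that $M$ and $N$ are distinct.

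Hence $g\equiv 0$. Fiberwise orthogonality extends to arbitrary $f\in M$ and $f'\in N$ by the module structure: any such pair can be written as $f=h\phi$, $f'=h'\psi$ with $h,h'$ being $\ca$-measurable coefficients, so that $\E(f\ov{f'}\,|\,\ca) = h\ov{h'}\,g = 0$. I expect the main subtlety to lie in the intertwining identity $\E(F\circ S_\Theta\,|\,\ca) = \E(F\,|\,\ca)\circ S$; this is the place where Corollary~\ref{rfmpwn} plays an indispensable r\^ole, since it separates the non-singularity of the base from the measure-preservation of the fibers and reduces the conditional-expectation calculation to a routine change of variables. The remainder of the argument is then a relativized version of the classical fact that distinct eigenfunctions of an ergodic automorphism are orthogonal.
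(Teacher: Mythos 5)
Your argument is correct and is essentially the paper's own proof: your $g = \E(\phi\ov\psi\,|\,\ca)$ is exactly the paper's fiberwise inner product $\xi(y) = \langle\phi|_y,\psi|_y\rangle_{L^2\zdr}$, the intertwining identity you single out as the key step is precisely the unitarity of the fiber Koopman operators $U_{\Theta(y)}$ that the paper invokes, and the constancy of $|g|$ followed by the $T$-invariance of $F\cdot\ov{(g/c)}$ matches the paper's treatment of $\xi'$ and $\xi'\ov\phi\psi$. The only differences are presentational (conditional-expectation notation versus explicit fiberwise inner products), so the two proofs coincide in substance.
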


\begin{proof}
We assume that $M$ and $N$ are  rank-one $T$-invariant
$\pi$-modules that are not fiberwise orthogonal and show that they
must actually coincide.  Indeed, in this case each admits a
one-element fiberwise orthonormal basis: that is, there are $\phi
\in M$ and $\psi \in N$ such that $M = L^\infty\ycn\cdot\phi$ and
$N = L^\infty\ycn\cdot\psi$.

Now Proposition~\ref{prop:finite-rank-ess-bdd} gives that $|\phi|$
and $|\psi|$ are both equal to $1$ almost everywhere, and that
there are maps $\sigma,\tau:Y\to\bs^1$ such that
$U_{\Theta(y)}\phi|_{Sy} = \sigma(y)\phi|_y$ and
$U_{\Theta(y)}\psi|_{Sy} = \tau(y)\psi|_y$. Defining a new
function $\xi:Y\to\C$ by
\[\xi(y) := \langle \phi|_y,\psi|_y\rangle_{L^2\zdr},\]
we compute from these relations and the unitarity of $U_{\Theta(y)}$
that
\[\xi(Sy) = \langle \phi|_{Sy},\psi|_{Sy}\rangle_{L^2\zdr} =
\langle
U_{\Theta(y)}\phi|_{Sy},U_{\Theta(y)}\psi|_{Sy}\rangle_{L^2\zdr} =
\overline{\tau(y)}\sigma(y)\xi(y).\] Since by assumption $\phi$
and $\psi$ are not fiberwise orthogonal, it follows that $\xi$ is
non-zero on a subset of $Y$ of positive measure. On the other
hand, the above shows that $|\xi(y)|$ is $S$-invariant, and so in
fact $|\xi|$ is a non-zero constant almost everywhere. Therefore
we can define $\xi'(y) := \xi(y)/|\xi(y)|\in \bs^1$, and now the
above relations together imply that the function
$\xi'(y)\overline{\phi}(y,z)\psi(y,z)$ is $T$-invariant:
\[
\xi'(Sy)\overline{\phi}(Sy,\Theta(y)(z))\psi(Sy,\Theta(y)(z))=
\big(\overline{\tau}(y)\sigma(y)\xi'(y)\big)
\big(\overline{\sigma(y)\phi(y,z)}
\big)\big(\tau(y)\psi(y,z)\big)\]\[ =
\xi'(y)\overline{\phi(y,z)}\psi(y,z).\]

Hence by the ergodicity of $T$ it must be a constant, say $c\in
\bs^1$, and so re-arranging gives $\phi(y,z) = c\xi'(y)\psi(y,z)
\in N$.  Exactly similarly we have $\psi \in M$, and so $N = M$,
as asserted.
\end{proof}

\begin{Remark}\em
In order to work with finite-rank invariant modules of higher
rank, it is necessary first to prove that any such module can be
decomposed as a direct sum of finite-rank invariant modules that
are {\em irreducible}, in that they admit no further proper
invariant submodules.  This follows relatively easily by observing
that given finite-rank invariant modules $N \subset M$, the
fiberwise orthogonal complement $M\ominus N$ defined by $(M\ominus
N)_y = M_y\ominus N_y$ is also an invariant module, and then
performing a simple induction on rank to show that repeatedly
choosing minimal sub-modules and then restricting to their
orthogonal complements leads to the desired direct sum
decomposition.

These irreducible finite-rank modules form the building blocks of
all others, but it is classically known that
Proposition~\ref{prop:fibre-orthog} does not extend to general
irreducible finite-rank invariant modules of higher rank,
even in the setting of measure-preserving actions.
In the Appendix we include for completeness
an example that witnesses this failure.

\end{Remark}

Let us now bring Proposition~\ref{prop:fibre-orthog} to bear on the
problem of growth of the set of $L^\infty$-eigenvalues.

We know that $T$ acts as an isometry on the space $L^\infty\xbm$.
We let $e(T)$ denote the group of $L^\infty$-eigenvalues of $T$:
$c\in\bs^1$ belongs to $e(T)$ if there exists $0\neq f\in
L^\infty\xbm$ such that $f\circ T=cf$. Since $T$ is also assumed
ergodic, the modulus of this $f$ is constant, whence in the
ergodic case we can additionally assume that eigenfunctions have
modulus~1. The group $e(T)$ is a Borel subgroup of $\bs^1$ and the
Borel structure is generated by a Polish topology (stronger than
the induced Euclidean topology, see \cite{Ho-Me-Pa}, \cite{Na}).
Recall also that since $L^\infty$ is not separable, in general
$e(T)$ is uncountable (see \cite{Na}, Chapter 15). Our next aim is
to prove the following.

\begin{Th}\label{rfmp3} Assume that $T$ is a non-singular ergodic
automorphism of a standard probability Borel space $\xbm$. Assume
moreover that $\ca\subset\cb$ is a factor of $T$ which is r.f.m.p.
Then the quotient $e(T)/e(T|_{\ca})$ is countable. In particular,
if $e(T|_{\ca})$ is countable, then $e(T)$ is also countable.
\end{Th}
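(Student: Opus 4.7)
My plan is to attach to each eigenvalue $c \in e(T)$ a rank-one $T$-invariant $\pi$-module in $L^2\xbm$, and then combine the fiber-orthogonality of distinct such modules (Proposition~\ref{prop:fibre-orthog}) with the separability of $L^2\xbm$. Fix $c \in e(T)$ and choose an eigenfunction $f_c \in L^\infty\xbm$ with $f_c\circ T = c f_c$; since $T$ is ergodic, $|f_c|$ is $T$-invariant and hence constant, so after normalization we may take $|f_c|\equiv 1$. Set
\[M_c := L^\infty\ycn\cdot f_c.\]
This is a $\pi$-module with fiberwise orthonormal basis $\{f_c\}$ (since $\E(|f_c|^2\,|\,\ca)\equiv 1$), and it is $T$-invariant because $(h f_c)\circ T = c(h\circ S)\cdot f_c$ with $h\circ S \in L^\infty\ycn$. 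Hence $M_c$ has rank one.

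The key step is the implication: if $M_{c_1} = M_{c_2}$, then $c_2/c_1 \in e(T|_\ca)$. Indeed, in this case $f_{c_2} = h\cdot f_{c_1}$ for some $h \in L^\infty\ycn$, and since both $f_{c_i}$ are unimodular, so is $h$. Applying $T$ to the relation $f_{c_2} = h f_{c_1}$ and using the eigenvalue equations yields $c_2 h f_{c_1} = c_1 (h\circ S) f_{c_1}$; cancelling the nowhere-vanishing factor $f_{c_1}$ gives $h\circ S = (c_2/c_1)\,h$, so $c_2/c_1 \in e(T|_\ca)$ with eigenfunction $h$.

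To finish, choose a system of representatives $\{c_\alpha\}$ for the cosets of $e(T|_\ca)$ in $e(T)$. By the previous paragraph, the modules $M_{c_\alpha}$ are pairwise distinct, so Proposition~\ref{prop:fibre-orthog} guarantees that the eigenfunctions $f_{c_\alpha}$ are pairwise fiberwise orthogonal. Integrating $\E(f_{c_\alpha}\overline{f_{c_\beta}}\,|\,\ca) = 0$ over $X$ then gives $\langle f_{c_\alpha},f_{c_\beta}\rangle_{L^2\xbm} = 0$ for $\alpha\neq\beta$. Since each $f_{c_\alpha}$ is a unit vector in the separable Hilbert space $L^2\xbm$, the index set must be countable, so $e(T)/e(T|_\ca)$ is countable. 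The ``in particular'' claim is then immediate, as a group that is a countable union of countable cosets is countable. I expect the main care to go into the second step, where one has to unwind the eigenfunction identities inside $L^\infty\xbm$ and exploit the fact that every function involved can be taken unimodular; once that is done, the rest is a quick deployment of Proposition~\ref{prop:fibre-orthog} together with the separability of $L^2\xbm$.
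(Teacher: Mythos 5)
Your proof is correct and takes essentially the same approach as the paper: attach a rank-one $T$-invariant $\pi$-module to each $L^\infty$-eigenvalue, use Proposition~\ref{prop:fibre-orthog} to force fiberwise orthogonality of distinct modules, and deduce countability from the separability of $L^2\xbm$. The only cosmetic difference is that the paper builds a maximal family of pairwise fiberwise-orthogonal rank-one modules and then shows it exhausts $e(T)$ via the equal-or-orthogonal dichotomy, whereas you parameterize directly by coset representatives of $e(T|_\ca)$ in $e(T)$; the underlying content is identical.
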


In other words we want to prove that if $T$ has uncountably many
eigenfunctions then  all its r.f.m.p.\ factors also have
uncountably many eigenfunctions. We will see that this follows
quickly from Proposition~\ref{prop:fibre-orthog}.

\begin{proof}
Suppose that $f \in L^\infty\xbm$ is an $L^\infty$-eigenfunction
of $T$ with eigenvalue $c \in \bs^1$.  Then $M_f:=
L^\infty\ycn\cdot f$ is a rank-$1$ $\ca$-module, and since
$(h\cdot f)\circ T = c\cdot (h\circ T)\cdot f$ we see that $M$ is
$T$-invariant.  If now $g$ is another $L^\infty$-eigenfunction of
$T$ with eigenvalue $c'\neq c$, then either $M_f = M_g$, in which
case we have in particular that $f = h\cdot g$ for some $h \in
L^\infty\ycn$, hence $c f = c'(h\circ T) g = c'(h\circ
T)\overline{h}f$ and so $h$ is an $L^\infty$-eigenfunction of $S$
with eigenvalue $c\overline{c'}$; or $M_f \neq M_g$, in which case
by Proposition~\ref{prop:fibre-orthog} they are fiberwise
orthogonal in $L^\infty(Y,\cc,\nu;L^2\zdr)$.

Let $\{c_i\in e(T):\:f_i\circ T=c_i\cdot f_i,\:i\in I\}$ be a
maximal family of eigenvalues so that for $i\neq j$ the rank-$1$
$\ca$-modules $M_{f_i}$ and $M_{f_j}$ are fiberwise orthogonal. By
fiberwise orthogonality, $\E(f_i\cdot\ov{f_j}|\ca)=0$ and in
particular $f_i\perp f_j$ in $L^2\xbm$. It follows that $I$ is
countable. Moreover, by the first part of the proof
$e(T)=\bigcup_{i\in I}c_ie(T|_{\ca})$ and the result follows.
\end{proof}

Using Corollary~\ref{rfmp1} we also obtain  the following.

\begin{Cor}\label{rfmp7} Assume that $T$ is a non-singular
automorphism of a standard probability Borel space $\xbm$. Assume
that $\ca\subset \cb$ is an ergodic r.f.m.p.\ factor such that the
group $e(T|_{\ca})$ of $L^\infty$-eigenvalues is countable. Then
for almost each ergodic component $\vep$ of $T$, $e(T,\vep)$ is
also countable.
\end{Cor}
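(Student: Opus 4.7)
The plan is to reduce almost immediately to Theorem~\ref{rfmp3} by passing to the ergodic decomposition and invoking Corollary~\ref{rfmp1}.

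First, I would apply Theorem~\ref{decomp} to obtain the ergodic decomposition $\mu = \int_{\cp_\alpha}\epsilon\,dQ(\epsilon)$ of $(T,\mu)$, where $\alpha = d\mu\circ T/d\mu$. Since $\ca \subset \cb$ is assumed ergodic and r.f.m.p.\ for $(T,\mu)$, Corollary~\ref{rfmp1} applies and yields that for $Q$-a.e.\ ergodic component $\epsilon$, the factor action $(T|_\ca,X/\ca,\ca,\mu|_\ca)$ is also a factor of $(T,X,\cb,\epsilon)$, and moreover $\ca$ remains r.f.m.p.\ for $(T,\epsilon)$.

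Next, each such $(T,\epsilon)$ is by construction an ergodic non-singular automorphism of a standard probability Borel space. Since $\ca$ is an r.f.m.p.\ factor of it whose induced factor system is exactly $(T|_\ca,\mu|_\ca)$, we are in the situation covered by Theorem~\ref{rfmp3}: it gives that the quotient $e(T,\epsilon)/e(T|_\ca,\mu|_\ca)$ is countable. By the hypothesis that $e(T|_\ca)$ is countable, it follows at once that $e(T,\epsilon)$ is countable for $Q$-a.e.\ $\epsilon$.

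There is no real obstacle here beyond correctly invoking Corollary~\ref{rfmp1} to transfer the r.f.m.p.\ property and the identification of the factor system from $(T,\mu)$ to almost every ergodic component $(T,\epsilon)$; once that is in place, Theorem~\ref{rfmp3} does all of the work.
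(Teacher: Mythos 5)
Your proposal is exactly the argument the paper has in mind: the corollary is stated with only the remark that it follows from Corollary~\ref{rfmp1}, and your chain of Theorem~\ref{decomp}, Corollary~\ref{rfmp1}, and Theorem~\ref{rfmp3} is precisely how it is obtained. No gaps; this matches the intended proof.
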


\section{Cocycles, Mackey actions and invariant measures for
Rokhlin cocycle extensions}\label{cocycles} We will now recall
some basic facts about cocycles (see e.g.\ \cite{Aa}, \cite{Sch}).
Assume that $T$ is an ergodic measure-preserving automorphism of a
standard probability Borel space $\xbm$, i.e.\ $T\in{\rm
Aut}_0\xbm$. Let $G$ be a locally compact second countable
(l.c.s.c) group. Each measurable map $\va:X\to G$ is called a {\em
cocycle}; more precisely $\va$ generates the cocycle
$\va^{(n)}(\cdot)$ by the following formula
$$
\va^{(n)}(x)=\left\{\begin{array}{lll}
\va(T^{n-1}x)\cdot\ldots\cdot\va(x)&\mbox{if}& n>0\\
1&\mbox{if}& n=0\\
(\va(T^{-1}(x)\cdot\ldots\cdot\va(T^{n}x))^{-1}&\mbox{if}&
n<0.\end{array}\right.$$ Denote by $\tf$ the corresponding skew
product:
$$
\tf:(X\times G,\cb(X\times G),\la_G)\to(X\times G,\cb(X\times
G),\la_G),\;\;\tf(x,g)=(Tx,\va(x)g),$$ where $\la_G$ is a
left-invariant Haar measure. Then
$(\tf)^n(x,g)=(T^nx,\va^{(n)}(x)g)$ and also
$\va^{(n+m)}(x)=\va^{(n)}(T^mx)\va^{(m)}(x)$ for every $n,m\in\Z$.
Let $\tau=(\tau_g)_{g\in G}$ denote the natural (right) $G$-action
on $X\times G$: $\tau_g(x,h)=(x,hg^{-1})$. Then $\tau_g$ is
non-singular with respect to $\la_G$ and it commutes with $\tf$.
Fix a probability measure $\la$ equivalent to $\la_G$ and consider
the $\sigma$-algebra $\ci_{\va}$ of $\tf$-invariant subsets. Since
$(X\times G,\cb(X\times G),\mu\ot\la)$ is a standard probability
Borel space,  the quotient space $((X\times
G)/\ci_{\va},\ci_{\va},\mu\ot\la|_{\ci_{\va}})$ is well-defined
(and is also standard). This space is called the {\em space of
ergodic components} and it will be denoted by
$(C_\varphi,\cb_\varphi,\la_\varphi)$. Since $\tau$ preserves
$\ci_\varphi$ it also acts on the space of egodic components. This
non-singular  $G$-action is called the {\em Mackey action}
associated to $\varphi$, and it is  always ergodic. It will be
denoted by $\tau^\varphi=(\tau^\varphi_g)_{g\in G}$.

A cocycle $\va:X\to G$ will be called {\em ergodic} if $\tf$ is
ergodic. This is clearly equivalent to the fact that the Mackey
action reduces to the one-point action. A cocycle $\va$ is said to
be {\em recurrent} if $\tf$ is conservative (i.e. it has no
wandering sets of positive measure). This is equivalent to the
fact that for a.e. ergodic component of $\tf$, the non-singular
action of $\tf$ on it is {\em properly ergodic} (that is, it is
not reduced to a single orbit).

Assume now that we have a measurable representation of $G$ in the
group $\Aut_0\ycn$ of measure-preserving automorphisms of $\ycn$:
$g\mapsto S_g$, and we put $\cs=(S_g)_{g\in G}$. We  define the
corresponding skew product $\tfs$ acting on $(X\times
Y,\cb\ot\cc,\mu\ot\nu)$ by $ \tfs(x,y)=(Tx,S_{\va(x)}(y))$. We are
interested in $\tfs$-invariant probability measures whose
projection on $X$ is $\mu$. The simplex of such measures will be
denoted by $ \cp(\tfs;\mu).$ On the product space $C_\varphi\times
Y$ we can consider  the diagonal  $G$-action
$\tau^\varphi\times\cs$: $(\tau^\varphi\times
\cs)_g=\tau^\varphi_g\times S_g$. Following \cite{Da-Le} (and the
earlier papers \cite{Le-Le}, \cite{Le-Pa}) we consider also the
simplex \begin{eqnarray*} &&\cp(
\tau^\varphi\times\cs,\cb_\varphi; \la_\varphi)\\
&&\quad\quad:=\{\rho\in\cp(C_\varphi\times Y)
:\:\rho|_{\cb_\varphi}=\la_\varphi\;\mbox{and}\;
\cb_\varphi\;\mbox{is an r.f.m.p.\ factor
of}\;\tau^\varphi\times\cs\}.
\end{eqnarray*}

The main result about this situation that we need is the
following.

\begin{Th}[\cite{Da-Le},\cite{Le-Pa}]\label{isomorphism} The simplices $\cp(\tfs;\mu)$
and $\cp(\tau^\varphi\times \cs,\cb_\varphi;\la_\varphi)$ are
affine isomorphic.\bez\end{Th}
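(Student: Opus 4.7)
The plan is to exhibit both directions of the asserted isomorphism explicitly, using the Mackey quotient $\pi:X\times G\to C_\va$ together with fibered disintegration. The underlying observation is that a $\tfs$-invariant probability measure $\rho$ on $X\times Y$ with $\mu$-marginal on $X$ is the same data as a measurable map $x\mapsto\rho_x\in\cp(Y)$ satisfying the cocycle-equivariance $\rho_{Tx}=(S_{\va(x)})_*\rho_x$; dually, a measure $\tilde\rho\in\cp(\tau^\va\times\cs,\cb_\va;\la_\va)$ together with its r.f.m.p.\ property amounts to a measurable map $c\mapsto\tilde\rho_c\in\cp(Y)$ with $\tilde\rho_{\tau^\va_g c}=(S_g)_*\tilde\rho_c$ for $\la_\va$-a.e.\ $c$ and every $g$.

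For the forward map, given $\rho$ I would set $F(x,g):=(S_g)^{-1}_*\rho_x$. A direct computation using the cocycle identity and the homomorphism property of $g\mapsto S_g$ shows simultaneously that (i) $F$ is $\tf$-invariant and (ii) $F(\tau_h(x,g))=(S_h)_*F(x,g)$. Property (i) makes $F$ descend to a measurable $\bar F:C_\va\to\cp(Y)$, and property (ii) translates into $\bar F(\tau^\va_g c)=(S_g)_*\bar F(c)$. I then set
\[\tilde\rho:=\int_{C_\va}\delta_c\ot\bar F(c)\,d\la_\va(c),\]
which has $\cb_\va$-marginal $\la_\va$ by construction. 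For the backward map I would reverse the procedure: disintegrate $\tilde\rho$ over $\cb_\va$, pull the disintegration back to $F(x,g):=\tilde\rho_{\pi(x,g)}$ on $X\times G$, verify that $F$ is $\tf$-invariant and $G$-equivariant, and then define $\rho_x:=(S_g)_*F(x,g)$ for (any) $g$, which is well-defined thanks to the $G$-equivariance. The resulting $\rho:=\int_X\delta_x\ot\rho_x\,d\mu(x)$ is $\tfs$-invariant by the cocycle relation $\rho_{Tx}=(S_{\va(x)})_*\rho_x$.

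Both maps are manifestly affine, and they are mutual inverses by unwinding the definitions: going forward then backward returns the original $\{\rho_x\}$ because $\bar F(\pi(x,g))=(S_g)^{-1}_*\rho_x$, and going backward then forward returns $\tilde\rho$ because the pull-back-then-descend procedure is the identity on $\pi$-invariant objects. The step I expect to require the most care is the verification of the r.f.m.p.\ property in the forward direction: one computes the push-forward $(\tau^\va_g\times S_g)_*\tilde\rho$ and re-disintegrates over $\cb_\va$ after the change of variable $c'=\tau^\va_g c$, which introduces a Radon--Nikod\'ym factor $d(\tau^\va_g)_*\la_\va/d\la_\va$ that is already a function of $c'$ alone; the $G$-equivariance $\bar F(\tau^\va_g c)=(S_g)_*\bar F(c)$ is then exactly what is needed to make the new conditional measures coincide with the old ones, so that the Radon--Nikod\'ym derivative of the pushed measure with respect to $\tilde\rho$ remains $\cb_\va$-measurable. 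A remaining minor technicality --- that evaluating $F(x,e)$ touches a measure-zero section of $X\times G$ --- is handled by observing that $(S_g)_*F(x,g)$ is $\la$-a.e.\ constant in $g$, so one can unambiguously average: $\rho_x:=\int_G (S_g)_*F(x,g)\,d\la(g)$.
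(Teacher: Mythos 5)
Your construction is correct and is essentially the one the paper relies on: Theorem~\ref{isomorphism} is quoted from the references without a standalone proof here, but exactly this affine bijection is unpacked inside the proof of Lemma~\ref{j1}, where the forward map builds $\ov{\kappa}_{(x,g,z)}=S_g\kappa_{(x,z)}$ from a disintegration, observes its invariance under the skew product so that it descends to the Mackey space $C_\va$, and recovers the disintegration by the average $\kappa_{(x,z)}=\int_G S_g^{-1}\ov{\kappa}_{(x,g,z)}\,d\la(g)$. Up to a sign convention in how $S_g$ acts on measures, these are precisely your $F(x,g)=(S_g)^{-1}_*\rho_x$ and your reconstruction $\rho_x=\int_G (S_g)_*F(x,g)\,d\la(g)$, and your verification of the r.f.m.p.\ condition and your use of the constancy of $(S_g)_*F(x,g)$ to dodge the measure-zero section are both sound.
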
 In what follows we will also use
some elements of the proof of this theorem.

\section{Joinings}\label{joinings}
Assume that $T\in {\rm Aut}_0\xbm$ and $S\in{\rm Aut}_0\ycn$ are
ergodic. Any $T\times S$-invariant measure $\kappa\in\cp(X\times
Y,\cb\ot\cc)$ whose projections $\kappa_X$ on $X$ and $ \kappa_Y$
on $Y$ are $\mu$ and $\nu$ respectively is called a {\em joining}
of $T$ and $S$, and we write $\kappa\in J(T,S)$. Each $\kappa\in
J(T,S)$ defines a new automorphism which is $(T\times S, X\times
Y,\cb\ot\cc,\kappa)$. Sometimes, less formally, we will also
denote the latter system as $T\vee S$. In general $\kappa\in
J(T,S)$ is not ergodic, but its ergodic decomposition consists
solely of joinings (e.g.\ \cite{Gl}), and in particular the set
$J^e(T,S)$ of ergodic joinings  is always non-empty. Note that
$\mu\ot\nu\in J^e(T,S)$ if and only if $e(T)\cap e(S)=\{1\}$.
Another easy example of an ergodic joining is available when $T$
and $S$ are isomorphic: indeed, if $W:\xbm\to\ycn$ is an
isomorphism then the measure $\mu_W$  defined by
$$\mu_{W}(B\times C)=\mu(B\cap W^{-1}(C))$$
is a member of $J^e(T,S)$. The resulting system $T\vee S$ is called
a {\em graph} joining, and is isomorphic to $T$. Following
\cite{Fu}, $T$ and $S$ are called {\em disjoint} if
$J(T,S)=\{\mu\ot\nu\}$, and in this case we write $T\perp S$.

We can also consider joinings of higher orders. If $T_i\in {\rm
Aut}_0(X_i,\cb_i,\mu_i)$ is ergodic for $i=1,\ldots,n$ then each
$T_1\times\ldots\times T_n$-invariant
$\kappa\in\cb(X_1\times\ldots\times X_n)$ whose projections
$\kappa_{X_i}$ are $\mu_i$ for $i=1,\ldots,n$ is called a joining
of $T_1,\ldots, T_n$. Such a joining is called {\em pairwise
independent} \cite{Ju-Ru} if $\kappa_{X_i\times
X_j}=\mu_i\ot\mu_j$ for $i\neq j$, $i,j=1,\ldots, n$. An ergodic
$T\in {\rm Aut}_0\xbm$ is called PID if every $\kappa\in
J(T_1,\ldots,T_n)$, with all $T_i=T$, which is pairwise
independent is equal to $\mu^{\ot n}$. The PID property was
introduced as a joining counterpart to the problem of mixing of
all orders \cite{Ju-Ru} (see  Chapter~11 in \cite{Gla03}).

Let us recall another definition in this connexion.  Recall that an
extension of systems is {\em distal} if it can be expressed as a
(possibly transfinite) tower of isometric extensions (see, for
instance, Chapter 10 of Glasner~\cite{Gla03}). Given this,
following~\cite{Ju-Le} a PID automorphism $T$ is called {\em
distally simple} (DS) if for each $\kappa\in
J^e(T,T)\setminus\{\mu\otimes\mu\}$ the system $(T\times T,X\times
X,\cb\ot\cb,\kappa)$ over the factor given by
$\cb\ot\{\emptyset,X\}\subset\cb\ot\cb$ (i.e.\ the extension $T\vee
T\to T$) is distal.

Consider now an ergodic system $T$ which has the property that
whenever we take its ergodic joining with the Cartesian product of
two weakly mixing automorphisms $S_1\times S_2$, then in the
joining $T\vee(S_1\times S_2)$ one of $S_i$s, say $S_1$, is
independent from the joining $T\vee S_2$. Such $T$ are said to
have the {\em joining primeness (JP)} property, and their basic
properties were studied in \cite{Le-Pa-Ro}. It follows from the
definition that each JP system is PID, and a little work shows
that each DS system enjoys the JP property \cite{Le-Pa-Ro}.

It can be shown that under some mild assumptions on $\va$ and
$\cs$ the extension $\tfs\to T$ is relatively weakly
mixing~\cite{Le-Le}, while any DS system must be distal over an
arbitrary non-trivial factor~\cite{Ju-Le}, and so in general the
DS property of $T$ is not retained by $\tfs$. An implicit question
in~\cite{Le-Pa-Ro} is whether under some natural assumptions on
$\va$ and $\cs$ the JP property persists under such extensions. We
will identify some such assumptions in the next section, and so
obtain natural examples of JP systems which do not enjoy the DS
property (see also \cite{Ka-Le}).

\section{Lifting multipliers of $R^\perp$} Assume that $T\in{\rm Aut}_0\xbm$ is ergodic. We
will now study ergodic properties of automorphisms of the form
$\tfs\vee S'$, i.e.\ joinings of $\tfs$ with $S'$ acting on
$(Y',\cc',\nu')$, with a view towards proving the lifting result
stated in the introduction. It is tempting to write such an
automorphism as $(T\vee S')_{\va^{S'},\cs}$ with
$\va^{S'}(x,y')=\va(x)$, but we must be aware that in this
notation it is implicit that the ``coordinate $Y$'' is independent
of $X\times Y'$ which  is of course not true in general. (To see
this it is enough to take $S'=\tfs$ and consider a graph
self-joining.) Notice however that any  joining $\kappa$ of
$\tfs$ and $S'$ is a member of $\cp((T\vee S')_{\va^{S'},\cs};
\kappa_{X\times Y'})$, where $\kappa_{X\times Y'}$ is the
projection of $\kappa$ onto $X\times Y'$.

\begin{Remark}\em Assume that $\va:X\to G$ is ergodic and $\cs$ is
uniquely ergodic. Then, as shown in \cite{Le-Le},
$$
\cp(\tfs;\mu)=\{\mu\ot\nu\},$$ i.e. $\tfs$ is relatively uniquely
ergodic. Assume now that \beq\label{ciekawostka}\kappa\in
J(\tfs,S'),\;\kappa\neq\kappa_{X\times Y'}\ot\nu.\eeq Then the
cocycle $\va^{S'}:X\times Y'\to G$ given by
$\va^{S'}(x,y')=\va(x)$ and $(T\times S',\kappa_{X\times Y'})$
cannot be ergodic. Indeed, if the cocycle $\va^{S'}$ were ergodic
we would have again that $(T\vee S')_{\va^{S'},\cs}$ is relatively
uniquely ergodic which is a contradiction
with~(\ref{ciekawostka}), since $\kappa_{X\times Y'}\ot\nu\in
\cp((T\vee S')_{\va^{S'},\cs};\kappa_{X\times Y'})$.
\end{Remark}

We will now study joinings between a probability preserving system
of the form $(\tfs,\kappa')$ with $\kappa'\in\cp(\tfs;\mu)$ (which
we will shortly denote $\tfs'$) and a system $R$ (acting on $\zdr$)
that is weakly mixing.

\begin{Lemma}\label{j1} Assume that $\kappa\in J(\tfs',R)$. If for
a.e.\ ergodic $c\in C_\varphi$ the non-singular automorphism
$(\tf,c)$ has only countably many $L^\infty$-eigenvalues then
$$
\kappa=\kappa_{X\times Y}\otimes\rho$$ provided $\kappa|_{X\times
Z}=\mu\ot\rho$ (of course $\kappa_{X\times Y}=\kappa'$).
\end{Lemma}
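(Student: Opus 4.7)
The plan is to apply Theorem~\ref{isomorphism} to the lifted cocycle $\va^R:X\times Z\to G$, $\va^R(x,z):=\va(x)$, regarded as a cocycle over the measure-preserving base $(T\times R,\mu\otimes\rho)$. The strategy is to show that under the hypotheses the Mackey picture for $\va^R$ collapses to that of $\va$, so that the simplex $\cp((T\times R)_{\va^R,\cs};\mu\otimes\rho)$ is in affine bijection with $\cp(\tfs;\mu)$, forcing $\kappa=\kappa'\otimes\rho$.

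First, since $(T\times R)_{\va^R,\cs}=\tfs\times R$, any $\kappa\in J(\tfs',R)$ with $\kappa|_{X\times Z}=\mu\otimes\rho$ automatically belongs to $\cp((T\times R)_{\va^R,\cs};\mu\otimes\rho)$, and so does the product $\kappa'\otimes\rho$. By Theorem~\ref{isomorphism} this simplex is affinely isomorphic to $\cp(\tau^{\va^R}\times\cs,\cb_{\va^R};\la_{\va^R})$, so it suffices to show that $\kappa$ and $\kappa'\otimes\rho$ have the same image in the latter simplex.

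The key reduction is to identify $(C_{\va^R},\la_{\va^R})$ with $(C_\va,\la_\va)$ as Mackey $G$-spaces. Since $(T\times R)_{\va^R}$ on $(X\times Z)\times G$ is isomorphic to $T_\va\times R$ on $X\times G\times Z$, this amounts to showing that the ergodic decomposition of $T_\va\times R$ is $\int_{C_\va}(c\otimes\rho)\,dQ(c)$, i.e.\ that $(T_\va,c)\times R$ is ergodic for a.e.\ $c$. I would argue this spectrally: let $f\in L^\infty$ be $(T_\va\times R)$-invariant; subtracting its fiberwise $\rho$-mean (which is $T_\va$-invariant on $c$, hence constant by ergodicity of $(T_\va,c)$) yields an invariant $h$ with $\int h\,d\rho=0$ fiberwise. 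Viewing $h$ as a measurable section $v:X\times G\to L^2(Z,\rho)\ominus\C$, the invariance reads $v\circ T_\va=U_R^{-1}v$. Weak mixing of $R$ makes $U_R$ purely continuous-spectrum on $L^2(Z,\rho)\ominus\C$; decomposing $v=\int^\oplus v_\la\,d\sigma(\la)$ via the spectral theorem with $\sigma$ continuous, each non-zero $v_\la$ gives an $L^\infty$-eigenfunction of $(T_\va,c)$ with eigenvalue $\la^{-1}$. The countability of $e(T_\va,c)$ confines such $\la$ to a countable, hence $\sigma$-null, set, so $v\equiv 0$ and $f$ is constant.

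With $(C_{\va^R},\la_{\va^R})\cong(C_\va,\la_\va)$ and $\tau^{\va^R}$ identified with $\tau^\va$, Theorem~\ref{isomorphism} applied in turn to $\va^R$ and to $\va$ gives a bijection $\cp((T\times R)_{\va^R,\cs};\mu\otimes\rho)\cong\cp(\tfs;\mu)$. Naturality of the Mackey construction (specifically, that the map for $\va^R$ factors through the forgetful projection $X\times Z\to X$ on the base, because the $Z$-coordinate becomes a mere spectator under the identification above) implies that both $\kappa$ and $\kappa'\otimes\rho$ descend to the same element $\kappa'\in\cp(\tfs;\mu)$, whence by injectivity $\kappa=\kappa'\otimes\rho$. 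The main obstacle is the ergodicity argument for $(T_\va,c)\times R$ in the third paragraph: this is where both hypotheses enter, and one must handle carefully the interplay between the $L^2$-spectrum of the unitary $U_R$ and the $L^\infty$-eigenvalues of the non-singular system $(T_\va,c)$.
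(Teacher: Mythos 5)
Your proof is correct and follows essentially the same route as the paper's. Both arguments pivot on the same key fact: passing from $\va$ to the lifted cocycle $\va^R$ on $X\times Z$, the relevant Mackey data collapse ($\ci_{\va^R}=\ci_\va\otimes\{\emptyset,Z\}$, equivalently $C_{\va^R}\cong C_\va$) because for a.e. ergodic component $c$ the non-singular product $(\tf|_c)\times R$ is still ergodic, which uses precisely the countability of $e(\tf|_c)$ together with weak mixing of $R$. The paper invokes Aaronson's spectral criterion (Theorem 2.7.1 of \cite{Aa}) for this ergodicity, whereas you rederive it from scratch via the direct-integral decomposition of $U_R$ on $L^2(Z,\rho)\ominus\C$; your sketch is fine, though one should note that the objects $v_\la$ are $H_\la$-valued rather than scalar, so you need to normalize and project (using ergodicity of $(\tf,c)$ to make $\|v(\cdot)_\la\|$ constant) before reading off a genuine element of $e(\tf|_c)$. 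For the final step, the paper argues directly that the $\ci_{\va^R}$-measurable family $\ov\kappa_{(x,g,z)}$ is a function of $(x,g)$ alone, then integrates; you instead invoke injectivity of the isomorphism from Theorem~\ref{isomorphism} together with the claim that $\kappa$ and $\kappa'\otimes\rho$ have the same image. That naturality claim is true, but verifying it amounts to exactly the same unwinding of the isomorphism's construction (one checks that integrating $\ov\kappa_{(x,g,z)}$ in $z$ against $\rho$ recovers $\ov{\kappa'}_{(x,g)}$), so in the end the two proofs are doing the same computation, with yours being slightly more indirect in phrasing. No gaps in substance.
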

\begin{proof}The proof will be a small modification of the proof
of Proposition~6.1 from \cite{Le-Pa} (and also bears comparison with
the proofs of Proposition~2.1 \cite{Le-Pa} and Proposition 6.1
\cite{Da-Le}).

We use the notation $\va^R$ for the cocycle $\va$ treated as a
function defined on $X\times Z$, and so serving as a cocycle for
$(T\times R,\mu\ot\rho)$; clearly $\kappa\in\cp((T\times
R)_{\va^R,\cs};\mu\ot\rho)$. Let us write
$$\kappa=\int_{X\times Z}\delta_{(x,z)}\ot\kappa_{(x,z)}
\,d\mu(x)\,d\rho(z)$$ and put
$\ov{\kappa}_{(x,g,z)}=S_g\kappa_{(x,z)}$; these measures define
the measure $\ov{\kappa}$ by
$$\ov{\kappa}=\int_{X\times Z\times
G}\ov{\kappa}_{(x,g,z)}\,d\kappa|_{X\times Z}(x,z)\,d\lambda(g).$$
Finally, the isomorphism in Theorem~\ref{isomorphism} sends $\kappa$
to $\widetilde{\kappa}$ which is the projection of $\ov{\kappa}$ on
$C_{\va^R}\times Y$. The map $$(x,g,z)\mapsto
\ov{\kappa}_{(x,g,z)}$$ is $(T\times R)_{\va^R}$-invariant (see the
formula~(10) in \cite{Le-Pa}), that is it is
$\ci_{\va^R}$-measurable. By assumption, on a.e.\ ergodic component
$c\in C_\varphi$ the (non-singular) automorphism $\tf$ is ergodic
and has only countably many $L^\infty$-eigenvalues. Since $R$ is
weakly  mixing, the non-singular Cartesian product automorphism
$(\tf|_c)\times R$ is still ergodic (this follows, for instance,
from the spectral condition of Theorem~2.7.1 in~\cite{Aa}, since the
spectral type of our weakly mixing transformation $R$ must
annihilate the countable set $e(T_\phi|_c)$). It follows that
$\ci_{\va^R}=\ci_\va\ot\{\emptyset,Z\}$. Hence the map
$(x,g,z)\mapsto \ov{\kappa}_{(x,g,z)}$ is in fact a function of
$(x,g)$ alone, and upon integrating
$$
\kappa_{(x,z)}=\int_GS_g^{-1}\ov{\kappa}_{(x,g,z)}\,d\la(g)$$ is a
function of $x$ alone, whence the result.
\end{proof}

We have now reached the proof of the main result:

\begin{Th}\label{main}
Assume that $T$ is an ergodic automorphism of a standard probability
Borel space $\xbm$. Let $\va:X\to G$ be an ergodic cocycle such that
$e(\tf)$ is countable. Let $\cs=(S_g)_{g\in G}$ be an ergodic
representation of $G$ in $\Aut_0\ycn$ (so $\tfs$ is ergodic). Assume
that $R$ is a weakly mixing automorphism of $\zdr$. If $T\in
\cm(\{R\}^\perp)$ then
$$
\tfs\in\cm(\{R\}^\perp).$$
\end{Th}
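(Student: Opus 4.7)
The plan is to take an arbitrary $S'\in\{R\}^\perp$ together with an ergodic joining $\kappa\in J^e(\tfs,S')$, pick any $\eta\in J((\tfs\vee S',\kappa),R)$, and prove that $\eta=\kappa\ot\rho$; this will show $(\tfs\vee S',\kappa)\perp R$ for every such $\kappa$, which is exactly $\tfs\in\cm(\{R\}^\perp)$. The argument splits into two stages: first use the multiplier hypothesis on $T$ to force $\eta$ to restrict to a product on the base coordinates $X\times Y'\times Z$, then invoke Lemma~\ref{j1}, applied to a suitably ``lifted'' cocycle, to promote this to the full product decomposition on $X\times Y\times Y'\times Z$.

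For the base step, project $\eta$ onto $X\times Y'\times Z$: this projection is a joining of $(T\vee S',\kappa|_{X\times Y'})$ with $R$, and $\kappa|_{X\times Y'}$ is itself an ergodic joining of $T$ and $S'$ (being a factor marginal of the ergodic system $(\tfs\vee S',\kappa)$). Since $T\in\cm(\{R\}^\perp)$ and $S'\in\{R\}^\perp$, the multiplier property yields $(T\vee S',\kappa|_{X\times Y'})\perp R$, and hence $\eta|_{X\times Y'\times Z}=\kappa|_{X\times Y'}\ot\rho$; in particular $\eta|_{X\times Z}=\mu\ot\rho$. For the fiber step, rewrite $\tfs\vee S'$ as the Rokhlin cocycle extension $(T\vee S')_{\va^{S'},\cs}$ of the ergodic probability system $(T\vee S',\kappa|_{X\times Y'})$, where $\va^{S'}(x,y'):=\va(x)$, and apply Lemma~\ref{j1} with $T\vee S'$ as the new base, $\va^{S'}$ the new cocycle, $\cs$ the group action, and $\eta$ the joining with $R$. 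The conclusion $\eta=\eta|_{X\times Y\times Y'}\ot\rho=\kappa\ot\rho$ then follows, since $\eta|_{X\times Y\times Y'}=\kappa$ by the joining property.

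The main technical obstacle is verifying the eigenvalue hypothesis of Lemma~\ref{j1} for this lifted cocycle, namely that for almost every ergodic component $c$ of the non-singular skew product $(T\vee S')_{\va^{S'}}$ on $(X\times Y'\times G,\kappa|_{X\times Y'}\ot\la)$, the set of $L^\infty$-eigenvalues $e((T\vee S')_{\va^{S'}},c)$ is countable. Since this new cocycle need not be ergodic in general, I would consider the sub-$\sigma$-algebra $\ca$ of sets depending only on $(x,g)$: it is $(T\vee S')_{\va^{S'}}$-invariant, the restricted action is exactly the original $\tf$ acting on $(X\times G,\mu\ot\la)$, and it is r.f.m.p., because the $T\times S'$-invariance of $\kappa|_{X\times Y'}$ makes the Radon-Nikod\'ym derivative of $\kappa|_{X\times Y'}\ot\la$ under $(T\vee S')_{\va^{S'}}$ a function of $(x,g)$ alone. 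Since $\tf$ is ergodic with $e(\tf)$ countable by hypothesis, Corollary~\ref{rfmp7} transfers countability of $L^\infty$-eigenvalues to almost every ergodic component of $(T\vee S')_{\va^{S'}}$, allowing Lemma~\ref{j1} to close the argument.
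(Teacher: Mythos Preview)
Your proof is correct and follows essentially the same route as the paper's: you use the multiplier hypothesis on $T$ to force the product structure on $X\times Y'\times Z$, observe that $\tf$ is an ergodic r.f.m.p.\ factor of $(T\vee S')_{\va^{S'}}$ so that Corollary~\ref{rfmp7} transfers countability of $L^\infty$-eigenvalues to almost every ergodic component, and then apply Lemma~\ref{j1} over the enlarged base $T\vee S'$. The only cosmetic difference is that the paper works directly with an ergodic three-way joining $\kappa\in J^e(\tfs,S',R)$, whereas you separate this into the two-way joining $\kappa\in J^e(\tfs,S')$ and an arbitrary $\eta\in J((\tfs\vee S',\kappa),R)$; since Lemma~\ref{j1} does not require ergodicity of the joining with $R$, both formulations are equivalent.
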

\begin{proof}
Take $\kappa\in J^e(\tfs,S',R)$ where $S'$ is an ergodic
automorphism acting on $(Y',\cc',\nu')$ and $S'\perp R$. Consider
first the automorphism $(T\times S',\kappa|_{X\times
Y'})_{\va^{S'}}$. Notice that $\tf$ is an r.f.m.p.\ factor (via
the map $(x,y',g)\mapsto (x,g)$) and that the cocycle $\va^{S'}$
is recurrent. Since $e(\tf)$ is countable, by
Corollary~\ref{rfmp7} for a.e.\ ergodic component $c\in
C_{\va^{S'}}$ the non-singular ergodic automorphism $((T\vee S')
_{\va^{S'}},c)$ has only countably many $L^\infty$-eigenvalues.

Consider $\kappa$ as an element of $J^e((T\vee
S')'_{\va^{S'},\cs},R)$, where, by our standing assumption,
$$
\kappa|_{X\times Y'\times Z}=\kappa|_{X\times Y'}\ot\rho.$$ It now
follows from Lemma~\ref{j1} that $\kappa=\kappa|_{X\times Y'\times
Y}\ot \rho$ and the result follows.
\end{proof}

This  strengthens  a result from \cite{Da-Le} where the lifting
multiplier theorem was proved for $T$ an ergodic rotation.

Notice also that from the proof of Theorem~\ref{main} we easily
deduce the following (see the end of Section~\ref{joinings}).

\begin{Cor}\label{jp100} Under
the assumptions on $\va$ and $\cs$ in Theorem~\ref{main}, if $T$
enjoys the JP (respectively, PID) property then  so does
$\tfs$.\bez
\end{Cor}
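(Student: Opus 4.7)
The plan is to mimic the final step of the proof of Theorem~\ref{main} in each case: take an ergodic joining of $\tfs$ with the appropriate test system, project to the $X$-coordinate(s), invoke the JP (respectively PID) property of $T$ to obtain a partial independence statement at the base level, and then use Lemma~\ref{j1} to lift that independence to $\tfs$ through the Rokhlin cocycle extension. In both cases the spectral input for Lemma~\ref{j1} will be supplied by Corollary~\ref{rfmp7}, after identifying $\tf$ as an r.f.m.p.\ factor of the relevant auxiliary cocycle extension.

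For the JP assertion, take $\kappa\in J^e(\tfs,S_1,S_2)$ with $S_1,S_2$ weakly mixing. The projection $\kappa|_{X\times Y_1\times Y_2}$ is an ergodic joining of $T$ with $S_1\times S_2$, so the JP property of $T$ gives, without loss of generality, $\kappa|_{X\times Y_1\times Y_2}=\nu_1\otimes \kappa|_{X\times Y_2}$. Now view $\kappa$ as an element of $J((T\vee S_2)_{\va^{S_2},\cs},S_1)$, where $\va^{S_2}(x,y_2):=\va(x)$ is the trivially lifted cocycle. The projection $(x,y_2,g)\mapsto(x,g)$ realizes $\tf$ as an ergodic r.f.m.p.\ factor of the cocycle extension $\tf^{\va^{S_2}}$, so Corollary~\ref{rfmp7} combined with the countability of $e(\tf)$ yields countably many $L^\infty$-eigenvalues on almost every ergodic component of this extension; together with the weak mixing of $S_1$, Lemma~\ref{j1} then yields $\kappa=\kappa|_{X\times Y\times Y_2}\otimes\nu_1$. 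This is exactly $S_1\perp\tfs\vee S_2$, so $\tfs$ enjoys JP.

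For the PID assertion we induct on $n$, with $n=2$ vacuous. For $n\ge 3$ and a pairwise independent $\kappa\in J((\tfs)^n)$, the projection $\kappa|_{X^n}$ is pairwise independent, so PID of $T$ forces $\kappa|_{X^n}=\mu^{\otimes n}$; the marginal of $\kappa$ on any $n-1$ coordinates is again pairwise independent, so the inductive hypothesis gives $\kappa|_{(X\times Y)^{n-1}}=(\mu\otimes\nu)^{\otimes(n-1)}$. We then view $\kappa\in J(\tfs,(\tfs)^{n-1})$ with the product measure on the second factor and run the same Lemma~\ref{j1}-style argument with $R=(\tfs)^{n-1}$, aiming to split off the last coordinate and conclude $\kappa=(\mu\otimes\nu)^{\otimes n}$.

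The main technical obstacle will arise in the PID step, since $(\tfs)^{n-1}$ need not be weakly mixing and so Lemma~\ref{j1} is not directly applicable. To deal with this one replaces the weak mixing of $R$ by the direct spectral verification that, on almost every ergodic component of the joined cocycle extension built from $\va^{(n)}$ over $T^n$ through Theorem~\ref{isomorphism}, the maximal spectral type of $(\tfs)^{n-1}$ does not meet the countable set $e(\tf|_c)$ supplied by Corollary~\ref{rfmp7}; this preserves the ergodicity of the relevant non-singular Cartesian product with $\tf|_c$ and lets the proof of Lemma~\ref{j1} go through unchanged, closing the induction.
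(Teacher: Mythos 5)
Your JP argument is correct and is exactly the argument the paper has in mind: project the ergodic joining to the base, apply the JP property of $T$ to obtain $S_1\perp T\vee S_2$ (say), then view $\kappa$ as a joining of $(T\vee S_2)_{\va^{S_2},\cs}$ with the weakly mixing $S_1$, feed Corollary~\ref{rfmp7} into Lemma~\ref{j1}, and split off $S_1$. This is a faithful repetition of the mechanism in the proof of Theorem~\ref{main}, and it goes through unchanged.

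The PID clause, however, does not follow from the plan you sketch. There are two separate problems. First, Lemma~\ref{j1} needs its input hypothesis $\kappa|_{X\times Z}=\mu\ot\rho$, which in your setup reads $\kappa|_{X_1\times (X\times Y)^{n-1}}=\mu\ot(\mu\ot\nu)^{\ot(n-1)}$, i.e.\ that $X_1$ is independent of the entire $(n-1)$-fold block. Pairwise independence only gives you $X_1\perp (X\times Y)_j$ for each individual $j$, and the inductive hypothesis gives full independence of any $n-1$ of the coordinates; neither fact, nor their conjunction, implies the joint independence of $X_1$ from $\bigvee_{j\ge 2}(X\times Y)_j$ (this is precisely the kind of ``pairwise does not imply joint'' phenomenon that PID is supposed to overcome, so one cannot assume it on the way to proving it). Second, the proposed spectral rescue of the weak-mixing hypothesis cannot work: with $R=(\tfs)^{n-1}$ carrying the product measure $(\mu\ot\nu)^{\ot(n-1)}$, the system $R$ need not even be ergodic (so $\tf|_c\times R$ certainly fails to be ergodic), and in any case $T$ is a factor of both $\tf|_c$ and of $\tfs$, so $e(T)\subset e(\tf|_c)\cap e\big((\tfs)^{n-1}\big)$; whenever $T$ is not weakly mixing the spectral type of $R$ therefore gives positive mass to the countable set $e(\tf|_c)$, and the ergodicity criterion from \cite{Aa} that underlies Lemma~\ref{j1} is violated. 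Thus the PID direction needs a genuinely different reduction than a one-shot application of Lemma~\ref{j1} with $R=(\tfs)^{n-1}$; your current plan does not close.
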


\appendix

\section{Appendix: Non-orthogonal irreducible rank-$2$ modules}
We will show that Proposition~\ref{prop:fibre-orthog} does not
extend to irreducible rank-2 modules in the setting of
measure-preserving systems.

 We first construct the following extension of systems.
Let $(S,Y,\cc,\nu)$ be any aperiodic ergodic $\nu$-preserving
system. Then let $\theta:Y\to {\rm U}(2)$  (the group of $2\times
2$ unitary matrices considered with Haar measure $\la_{{\rm
U}(2)}$) be any ergodic cocycle. Let $\bs^3$ denote the unit
sphere in $\C^2$. Take $z_0=(1,0)\in \bs^3$ and fix $w\in {\rm
U}(2)$, so that \beq\label{e}w_{21}\neq0\eeq and also
$\alpha:=w_{11}=\langle z_0,wz_0\rangle_{\C^2}\neq0$. Then the set
$$
\{(uz_0,uwz_0):u\in {\rm U}(2)\}\!=\! \{(z_1,z_2)\in \bs^3\times
\bs^3: \langle z_1,z_2\rangle_{\C^2} = \alpha\} \left(\subset
{\bs^3}\times {\bs^3}\right)$$ is a hypersurface with the measure
$\rho$ which is the image of $\la_{{\rm U}(2)}$ via the map
$u\mapsto (uz_0,uwz_0)$.

Form the extended space
$$\xbm := (Y,\cc,\nu)\otimes ({\rm U}(2),\cb({\rm U}(2)),\la_{{\rm U}(2)})$$ and define the transformation
$T$ on $X$ by
\[T(y,u) = (Sy,\theta(y)u).\] It follows that
$T$ preserves $\mu = \nu\otimes \la_{{\rm U}(2)}$, and it is
ergodic by our assumption on the cocycle $\theta$.

Now let $M$ and $N$ be the finite-rank $\pi$-submodules of
$L^2\xbm$ defined by the respective bases $\{\phi_1,\phi_2\}$ and
$\{\psi_1,\psi_2\}$, where
\[\phi_i(y,u) = (uz_0)_i\quad\quad\mbox{and}
\quad\quad \psi_i(y,u) = (uwz_0)_i,\] writing $x_i$ for the
$i^{{\rm th}}$ component of $x \in \bs^3$ for $i=1,2$. It is now
easily checked that the each of $M$ and $N$ is $T$-invariant, and
their irreducibility follows from the fact that $\theta$ is
ergodic.

Indeed, since $M$ has rank $2$, if it were not irreducible then it
would contain a rank-$1$ submodule $M'$. Suppose that $F$ is the
base of $M'$. Then
$$F(y,u)=h_1(y)(uz_0)_1+h_2(y)(uz_0)_2$$
and, by $T$-invariance, $F\circ T=f(y) F $ (with $|f|=1$). However
both sides of the latter equality depend only on $(y,uz_0)$, so if
we consider $F=F(y,uz_0)$ then
$$F(Sy,(\theta(y)u)z_0)=f(y)F(y,uz_0).$$ Putting
$H(y)=(\overline{h_1(y)},\overline{h_2(y)})$ we can rewrite this
as
$$ F(y,u)=\langle uz_0,H(y)\rangle.$$ Hence
$$
\langle \theta(y)uz_0,H(Sy)\rangle=f(y)\langle uz_0,H(y)\rangle.$$
Since $uz_0$ runs over all vectors of length 1 as $u$ runs over
${\rm U}(2)$, we must have
$\theta(y)^{-1}H(Sy)=\overline{f(y)}H(y)$. This can be re-written
for the function $\tilde{H}(y,u)=u^{-1}H(y)$ as $\tilde{H}\circ
S_\theta=\overline{f(y)}\tilde{H}$. But
$$\tilde{H}(y,u)=(\tilde{H}_1(y,u),\tilde{H}_2(y,u)),$$
so $ \tilde{H}_i\circ S_\theta=\overline{f(y)}\tilde{H}_i$ for
$i=1,2$. Since $f$ is of modulus one and $S_\theta$ is ergodic,
there is a constant $c\in\C$ such that $\tilde{H}_1=c\tilde{H}_2$
which is impossible because of the definition of $\tilde{H}$.

We will now show that as subspaces of $L^2\xbm$ we have $M\cap N =
\{0\}$. If not, we can find $g_1,g_2,h_1,h_2$ elements of
$L^\infty\ycn$ such that \beq\label{n1} g_1(y)(uz_0)_1 +
g_2(y)(uz_0)_2=h_1(y)(uwz_0)_1+h_2(y)(uwz_0)_2\eeq a.e.\ for the
product measure $\nu\ot\la_{{\rm U}(2)}$, so for $\nu$-a.e.\ $y\in
Y$ we have the above equality for $\la_{{\rm U}(2)}$-a.e.\ $u \in
{\rm U}(2)$. To show that~(\ref{n1}) does not hold we take
$(a,b)\in\C^2\times\C^2$ and let
$$
W:=\{u\in {\rm U}(2):\: \langle
(uz_0,uwz_0),(a,b)\rangle_{\C^4}=0\}$$$$=\{u\in {\rm
U}(2):\:\langle((u_{11},u_{21}),(u_{11}w_{11}+u_{12}w_{21},u_{21}w_{11}+u_{22}w_{21})),
(a,b)\rangle_{\C^4}=0\}.$$ We need to show that $\la_{{\rm
U}(2)}(W) = 0$ (unless $a = b = 0$). Suppose instead that
$\la_{U(2)}(W)>0$. By letting $\bs^1$ act on ${\rm U}(2)$ as left
translations by the matrices $\left(\begin{array}{ll}1&
0\\0&\la\end{array}\right)$ we obtain that  for a.e. $u\in W$
there are infinitely many $\la$, $|\la|=1$, such that
$$\left(\begin{array}{ll}1&
0\\0&\la\end{array}\right)u\in W.$$ Therefore
$$
\langle((u_{11},\la u
_{21}),(u_{11}w_{11}+u_{12}w_{21},\la(u_{21}w_{11}+u_{22}w_{21}))),
(a,b)\rangle_{\C^4}=0.$$ Because this is linear in $\la$,
$$
\langle (u_{11},u_{12}),
(a_1+\ov{w}_{11}b_1,\ov{w}_{21}b_1)\rangle=0$$ and
$$\langle (u_{21},u_{22}),
(a_2+\ov{w}_{11}b_2,\ov{w}_{21}b_2)\rangle=0.$$ Since the latter two
equalities are satisfied on a set of $u\in U(2)$ of positive
measure, $a_1+\ov{w}_{11}b_1=0=\ov{w}_{21}b_1$ and also
$a_2+\ov{w}_{11}b_2=0=\ov{w}_{21}b_2$. In view of (\ref{e}) this
implies $a_1=a_2=b_1=b_2=0$.

Finally, the function
$$u\mapsto\overline{(uz_0)_1}(uwz_0)_1 +
\overline{(uz_0)_2}(uwz_0)_2 = \langle u z_0, uwz_0\rangle_{\C^2}
= \alpha$$ is constant on ${\rm U}(2)$ and therefore
\[\langle\phi_1|_y,\psi_1|_y\rangle_{L^2\zdr} +
\langle\phi_2|_y,\psi_2|_y\rangle_{L^2\zdr}\]\[ = \int_{{\rm
U}(2)} \overline{(uz_0)_1}(uwz_0)_1 +
\overline{(uz_0)_2}(uwz_0)_2\,\la_{{\rm U}(2)}(d u) =
\alpha\neq0,\] so the modules $M$ and $N$ are not fiberwise
orthogonal.  This completes our example.

Notice that in the above example, although we have constructed two
distinct irreducible modules $M$ and $N$ that are not fiberwise
orthogonal, their associated relative eigenvalues (the unitary
cocycles $U_{ij}$) are cohomologous (indeed, they are both equal
to $\theta$). In fact this is a general phenomenon: if two
irreducible finite-rank modules have non-cohomologous relative
eigenvalues, then it does hold that they must be fiberwise
orthogonal.  This is shown in the case of a measure-preserving
base system by Thouvenot in~\cite{Th1}; we quickly recall the
argument here for completeness.

We show that if $M$ and $N$ are irreducible but not fiberwise
orthogonal and we adopt respective fiberwise orthonormal bases
$\phi_1$, $\phi_2$, \ldots, $\phi_r$ and $\psi_1$, $\psi_2$,
\ldots, $\psi_s$, then the relative eigenvalues obtained from
expressing the restrictions of the overall cocycle $U_\Theta$ in
terms of these bases are cohomologous.  To see this, first let
$P_y:M_y \to N_y$ be the restriction to $M_y$ of the orthogonal
projection $L^2(\mu_y)\to N_y$.  This is easily seen to depend
measurably on $y$.  Our assumption that $M$ and $N$ are not
fiberwise orthogonal now implies that $P_y$ is not identically
zero.  However, we also clearly have $P_{Sy}\circ
U_{\Theta(y)}|_{M_y} = U_{\Theta(y)}|_{N_y}\circ P_y$, as a
consequence of the invariance of $M$ and $N$, and from this it
follows that $y \mapsto {\rm im}P_y$ and $y \mapsto {\rm ker}P_y$
define invariant submodules of $N$ and $M$ respectively, which
must therefore be trivial by irreducibility.  Since $P_y$ is
non-zero, this implies that it is both almost surely surjective
and almost surely injective, and hence that it is almost surely an
isomorphism.

We can extend this conclusion about $P_y$ as follows: for any
Borel $I \subset \R$, the sum of the eigenspaces of $P_y^\ast P_y$
(respectively, of $P_yP_y^\ast$) corresponding to eigenvalues in
$I$ also defines an invariant submodule of $M$ (respectively, of
$N$), and so by irreducibility must be either full or trivial.
This implies that both $P_y^\ast P_y$ and $P_yP_y^\ast$ are
actually almost surely scalar multiples of the identity operator
(on $M_y$ and $N_y$, respectively), and hence that $P_y$ is almost
surely a scalar multiple of an isometry, say $P_y = \alpha_y
\Phi_y$ with $\alpha_y>0$.  Now using this expression for $P_y$ in
the equation above gives $\alpha_{Sy}\Phi_{Sy}\circ
U_{\Theta(y)}|_{M_y} = \alpha_y U_{\Theta(y)}|_{N_y}\circ \Phi_y$,
and since $\Phi_{Sy}\circ U_{\Theta(y)}|_{M_y}$ and
$U_{\Theta(y)}|_{N_y}\circ \Phi_y$ are both isometries this
requires that $\alpha_{Sy} = \alpha_y$ and $\Phi_{Sy}\circ
U_{\Theta(y)}|_{M_y} = U_{\Theta(y)}|_{N_y}\circ \Phi_y$.
Finally, expressing this last equality in terms of the bases
$\phi_i$ and $\psi_j$ gives the desired cohomology, since
$U_{\Theta(y)}|_{M_y}$ and $U_{\Theta(y)}|_{N_y}$ are expressed as
the two relative eigenvalues and $\Phi_y$ becomes a unitary
matrix, since it is an isometry expressed between two orthonormal
bases.

\subsubsection*{Acknowledgements}

This paper was written while both authors were visiting MSRI
(Berkeley) in the Fall 2008.  Our thanks go to them for their
hospitality, and to Alexandre Danilenko for suggesting a valuable
improvement to the proof of Theorem~\ref{rfmp3}. We would also like
to thank Emmanuel Lesigne for numerous helpful discussions and
suggestions both on the content and on improving the presentation of
the paper.

\scriptsize

Tim Austin, Department of Mathematics,
University of California, Los Angeles,
California 90095-1555, U.S.A.; timaustin@math.ucla.edu\\
Mariusz Lema\'nczyk, Faculty of Mathematics and Computer Science,
Nicolaus Copernicus University, 87-100 Toru\'n, Poland;
mlem@mat.uni.torun.pl

\end{document}